\documentclass[sn-mathphys-num]{sn-jnl}


\usepackage{graphicx}%
\usepackage{multirow}%
\usepackage{amsmath,amssymb,amsfonts}%
\usepackage{amsthm}%
\usepackage{mathrsfs}%
\usepackage[title]{appendix}%
\usepackage{xcolor}%
\usepackage{textcomp}%
\usepackage{manyfoot}%
\usepackage{booktabs}%
\usepackage{algorithm}%
\usepackage{algorithmicx}%
\usepackage{algpseudocode}%
\usepackage{listings}%
\usepackage{arydshln}


\theoremstyle{thmstyleone}%
\newtheorem{theorem}{Theorem}
\newtheorem{proposition}[theorem]{Proposition}%

\newtheorem{lemma}{Lemma}[section]
\newtheorem{assumption}{Assumption}[section]

\theoremstyle{thmstyletwo}%
\newtheorem{remark}{Remark}%

\theoremstyle{thmstylethree}%
\newtheorem{definition}{Definition}%

\raggedbottom

\begin{document}

\title[Article Title ]{Inexact Regularized Quasi-Newton Algorithm for Solving Monotone Variational Inequality Problems}


\author[1]{\fnm{} \sur{Yuge Ye}}\email{fujianyyg@163.com}

\author*[1]{\fnm{} \sur{Qingna Li}}\email{qnl@bit.edu.cn}

\author[2]{\fnm{} \sur{Deren Han}}\email{handr@buaa.edu.cn}


\affil[1]{\orgdiv{Department of Mathematics and Statistics}, \orgname{Beijing Institute of Technology}, \orgaddress{\street{No.5 Yard, Zhong Guan Cun South Street}, \city{Beijing}, \postcode{100081}, \state{Beijing}, \country{China}}}

\affil[2]{\orgdiv{Department of Mathematical Science}, \orgname{Beihang University}, \orgaddress{\street{No.37 Xueyuan Road}, \city{Beijing}, \postcode{100191}, \state{Beijing}, \country{China}}}




\abstract{Newton's method has been an important approach for solving variational inequalities, quasi-Newton method is a good alternative choice to save computational cost. In this paper, we propose a new method for solving monotone variational inequalities where we introduce a merit function based on the merit function. With the help of the merit function, we can locally accepts unit step size. And a globalization technique based on the hyperplane is applied to the method. The proposed method applied to monotone variational inequality problems is globally convergent in the sense that subproblems always have unique solutions, and the whole sequence of iterates converges to a solution of the problem without any regularity assumptions.  We also provide extensive numerical results to demonstrate the efficiency of the proposed algorithm.}

\keywords{Variational inequality, quasi-Newton method, global convergence, merit function
}



\maketitle

\section{Introduction}\label{sec1}
\numberwithin{equation}{section}
Let $C$ be a non-empty closed convex subset of the Euclidean distance space $\mathbb{R}^n$, and let $F: \mathbb{R}^n \to \mathbb{R}^n$ be a continuous mapping. The variational inequality problem (denoted as VIP(F,C) is to find $x^* \in C$ such that:
\begin{equation}\label{VIP}
 \langle F(x^*), x-x^* \rangle \ge 0, \quad \forall \, x \in C,
\end{equation}
where $\langle \cdot , \cdot \rangle$ denotes the inner product in $\mathbb{R}^n$.

Variational inequality problems provide a unified framework for various problems. For more detailed information on these problems, one may refer to \cite{Pang1990}. Newton's method has been a popular approach to solve \eqref{VIP} \cite{marcotte1987note, TajiA1993, peng1999hybrid,peng1999box, ferris1999feasible}. To improve Newton's method, algorithms proposed by \cite{marcotte1987note, TajiA1993, peng1999hybrid,peng1999box, ferris1999feasible} utilize merit-function and line search techniques to achieve global convergence.

There are some methods use hyperplane projection techniques to obtain a globally convergent Newton's method \cite{solodov1998globally,solodov1999new,solodov2000truly}. Based on such observation, Han et al. \cite{Han2004} proposed a feasible Newton's method to solve mixed complementarity problems. In \cite{Qu2020} and  \cite{Qu2017}, Qu proposed a projection-based Newton's method to solve variational inequality problems and split feasibility problems.  Fatemeh\cite{abdi2019globally} utilized hyperplane projection techniques and proposed a globally convergent BFGS method to solve pseudomonotone variational inequality problems. Theses method utilizes hyperplane projection techniques to ensure the global convergence of the algorithm. Although it does not require strong function conditions, it needs projection at each iteration, even when the convergence property of the unit step is good.

Motivated by the above observations in this paper, we propose an inexact regularized quasi-Newton's method(IRQN) to solve the monotone VIP \eqref{VIP}.
We utilize a merit function to enable the algorithm to automatically use unit step sizes in a certain local neighborhood of the solution. Our numerical results demonstrate the efficiency of our method.

The organization of the paper is as follows. In section 2, we introduce some preliminaries for VIP \eqref{VIP}. In section 3, we propose the so-called IRQN for VIP \eqref{VIP}. In section 4 and section 5, we analyze the global and local convergence results respectively. We conduct various numerical experiments in section 6 to verify the efficiency of the proposed method. Final conclusions are given in section 7.

Notations. $\| \cdot \|$ represents the $l_2$ norm, where $x \in \mathbb{R}^n$. $P_C(x)$ denotes the projection of $x$ onto the set $C$, that is, $P_C(x)=\operatorname{arg}\min\limits_{y\in C}\frac{1}{2}\|y-x\|$.

\section{Preliminaries}
\numberwithin{equation}{section}
\label{sec:manu}

\numberwithin{equation}{section}
In this section, we provide some basic concepts in VIP \eqref{VIP} and their related properties.

\begin{definition}
The normal cone $N_C(x)$ at $x\in C$ is defined as:
$$N_C(x)=
\begin{cases}
\left \{z:(y-x)^{\top}z\le 0,\  \forall \  y\in C\right \},& \text{if $x\in C$},\\
\emptyset , & \text{otherwise}.
\end{cases}$$
\end{definition}

\begin{proposition}\label{pro-2-1}\rm{\cite[p.165]{Pang1990} }
$x^*$ is a solution to VIP$(F,C)$ if and only if
\begin{align*}
0\in F(x^*) + N_C(x^*):= (F+N_C)(x^*).
\end{align*}
\end{proposition}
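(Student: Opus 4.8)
The plan is to prove both implications by directly unwinding the definition of the normal cone $N_C$ given above; no deeper machinery is needed, since VIP$(F,C)$ and the inclusion $0\in(F+N_C)(x^*)$ are essentially two encodings of the same system of inequalities.

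First I would treat the forward direction. Assume $x^*$ solves VIP$(F,C)$, so in particular $x^*\in C$ and $\langle F(x^*),x-x^*\rangle\ge 0$ for every $x\in C$. Rewriting this as $(x-x^*)^{\top}\bigl(-F(x^*)\bigr)\le 0$ for all $x\in C$ and comparing with the definition of $N_C(x^*)$ (which applies because $x^*\in C$), I conclude $-F(x^*)\in N_C(x^*)$, and hence $0=F(x^*)+\bigl(-F(x^*)\bigr)\in F(x^*)+N_C(x^*)$.

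For the converse, assume $0\in F(x^*)+N_C(x^*)$. Then $N_C(x^*)$ is nonempty, so the definition forces $x^*\in C$. Choosing $z\in N_C(x^*)$ with $F(x^*)+z=0$, i.e. $z=-F(x^*)$, the defining inequality of the normal cone gives $(y-x^*)^{\top}z\le 0$ for all $y\in C$, which is exactly $\langle F(x^*),y-x^*\rangle\ge 0$ for all $y\in C$. Together with $x^*\in C$, this says $x^*$ solves VIP$(F,C)$.

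Since each step is a one-line manipulation, I do not anticipate a genuine obstacle; the only point requiring care is to notice that the convention making $N_C$ empty outside $C$ is precisely what delivers the feasibility requirement $x^*\in C$ in the converse direction — without it, the inclusion would not encode membership in $C$.
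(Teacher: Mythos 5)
Your proof is correct: the paper states this proposition as a cited fact from the literature and gives no proof of its own, and your argument is precisely the standard definitional one, with the right attention paid to the convention $N_C(x)=\emptyset$ for $x\notin C$, which is what makes the converse direction recover the feasibility condition $x^*\in C$. No gaps; nothing further is needed.
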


\begin{definition}
    For a mapping $F:\mathbb{R}^n\to \mathbb{R}^n$,
\begin{description}
\item[(i)] $F$ is monotone on $C$ if $\langle F(x)-F(y), x-y\rangle \ge 0$ for all $x, y\in C$;
\item[(ii)] $F$ is strictly monotone on $C$ if $\langle F(x)-F(y), x-y\rangle > 0$ for all $x, y\in C$, $x\neq y$;
\item[(iii)] $F$ is strongly monotone on $C$ with constant $\mu>0$ if $\langle F(x)-F(y),x-y\rangle \geq \mu\|x-y\|^2$ for all $x, y\in C$.
\end{description}
\end{definition}

\begin{proposition}\label{p1}\rm{\cite[p.267]{Bertsekas1989}}
Let $\alpha > 0$. Then $x^*$ is a solution to the VIP \eqref{VIP} if and only if
$$x^* = P_{C}(x^{*} - \alpha^{-1}F(x^{*})).$$
That is, $x^*$ is a fixed point of the mapping $H_{\alpha}(\cdot) : \mathbb{R}^n \to \mathbb{R}^n$, where $H_{\alpha}(x) = P_{C}(x - \alpha^{-1}F(x))$.
\end{proposition}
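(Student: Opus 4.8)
The plan is to reduce the statement to the standard variational characterization of the Euclidean projection onto a closed convex set. First I would recall (or, if the paper wants a self-contained argument, briefly establish) the projection inequality: for the non-empty closed convex set $C$ and any $y \in \mathbb{R}^n$, the point $z = P_C(y)$ is the unique element of $C$ satisfying
\begin{equation*}
\langle y - z, \, x - z \rangle \le 0 \qquad \text{for all } x \in C.
\end{equation*}
This is itself a small variational inequality: $z$ minimizes the strictly convex function $\phi(w) = \frac{1}{2}\|w - y\|^2$ over $C$, so the (one-sided) directional derivative of $\phi$ at $z$ along any feasible direction $x - z$ is non-negative, and $\nabla \phi(z) = z - y$ gives exactly the displayed inequality; strict convexity of $\phi$ yields existence and uniqueness of $z$. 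This is where the closedness and convexity of $C$ are used.

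Next I would simply substitute $y = x^* - \alpha^{-1} F(x^*)$ and test point $z = x^*$. By the characterization, the identity $x^* = P_{C}(x^{*} - \alpha^{-1}F(x^{*}))$ holds if and only if $x^* \in C$ and
\begin{equation*}
\langle \, (x^* - \alpha^{-1}F(x^*)) - x^*, \; x - x^* \, \rangle \le 0 \qquad \text{for all } x \in C,
\end{equation*}
that is, $\langle -\alpha^{-1} F(x^*), \, x - x^* \rangle \le 0$ for all $x \in C$. Since $\alpha > 0$, multiplying through by $-\alpha$ reverses the inequality and gives $\langle F(x^*), x - x^* \rangle \ge 0$ for all $x \in C$, which together with $x^* \in C$ is precisely the definition \eqref{VIP} of a solution to VIP$(F,C)$. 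Both implications pass through this single equivalence, so the proof is symmetric in the two directions; the only bookkeeping point is that "$x^*$ solves the VIP" already builds in $x^* \in C$, while $P_C$ always maps into $C$, so the feasibility requirement is automatically consistent on both sides.

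I do not anticipate any real obstacle: the content is entirely in the first-order characterization of the projection, and once that is invoked the rest is a two-line substitution together with the sign manipulation enabled by $\alpha > 0$. If the paper is content to cite the projection inequality as a known fact (for instance from the same source \cite{Bertsekas1989} already referenced in the statement), the argument is immediate; otherwise the short optimality-condition derivation sketched above can be included for completeness.
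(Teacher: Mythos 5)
Your proof is correct and is the standard argument behind this well-known result, which the paper itself does not prove but simply cites from \cite{Bertsekas1989}; the reduction to the first-order characterization of the projection, followed by the substitution $y = x^* - \alpha^{-1}F(x^*)$, $z = x^*$ and the sign flip from $\alpha > 0$, is exactly the intended reasoning. No gaps.
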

The projection mapping $P_{C}(\cdot)$ is nonexpansive, that is,
\begin{equation*}
\| P_{C}(x) - P_{C}(y)\| \le \|x-y\| , \  \forall \  x, y \in \mathbb{R}^n.
\end{equation*}

For a given constant $\alpha>0$, we consider the following merit function \cite{Fukushima1992} where $f_{\alpha} :\mathbb{R}^n \to \mathbb{R}$ is defined by
\begin{align}\label{merit}
f_{\alpha}(x) &= \max \left\{  - \langle F(x), y - x \rangle - \frac{1}{2}\langle y - x, \alpha(y - x)\rangle \mid y \in C \right\} \nonumber\\
&=-\langle F(x), H_{\alpha}(x)-x\rangle - \frac{\alpha}{2}\|H_{\alpha}(x)-x\| .
\end{align}

\begin{lemma}\label{lemma3} \rm{\cite[Theorem 3.1]{Fukushima1992}}
For $f_{\alpha}(\cdot)$ defined in \eqref{merit}, for any $ x\in C$, it holds that $f_{\alpha}(x) \ge 0$, and $f_{\alpha}(x) = 0$ if and only if $x$ is a solution to the VIP \eqref{VIP}. Thus, $x$ is a solution to VIP \eqref{VIP} if and only if it is a solution to the following optimization problem:
\begin{align*}
\min \quad & f_{\alpha}(x) \nonumber\\
\mbox{s.t.}\quad
& x \in C.
\end{align*}
\end{lemma}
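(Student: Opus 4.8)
The plan is to work directly from the two equivalent expressions for $f_{\alpha}$ in \eqref{merit}: the inner maximization over $y\in C$, which makes nonnegativity transparent, and the closed form in terms of $H_{\alpha}(x)=P_C(x-\alpha^{-1}F(x))$, which lets me bring in the variational characterization of the projection. Throughout, fix $x\in C$ and abbreviate $u=H_{\alpha}(x)$; note $\langle y-x,\alpha(y-x)\rangle=\alpha\|y-x\|^2$ since $\alpha>0$ is a scalar.

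First I would check $f_{\alpha}(x)\ge 0$: the point $y=x$ is feasible in the maximization in \eqref{merit} and attains the value $-\langle F(x),x-x\rangle-\tfrac{\alpha}{2}\|x-x\|^2=0$, so the maximum is at least $0$. Next---the step that does the real work---I would upgrade this to $f_{\alpha}(x)\ge \tfrac{\alpha}{2}\|u-x\|^2$. By the standard variational inequality characterizing $u=P_C(x-\alpha^{-1}F(x))$, namely $\langle x-\alpha^{-1}F(x)-u,\,v-u\rangle\le 0$ for all $v\in C$, choosing $v=x\in C$ and rearranging gives $\alpha\|x-u\|^2\le \langle F(x),x-u\rangle$. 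Substituting this into the closed form $f_{\alpha}(x)=\langle F(x),x-u\rangle-\tfrac{\alpha}{2}\|x-u\|^2$ yields the claimed bound.

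With these two inequalities the characterization of the zeros is immediate. If $x^{*}$ solves VIP \eqref{VIP}, then $\langle F(x^{*}),y-x^{*}\rangle\ge 0$ for every $y\in C$, so each term in the maximization in \eqref{merit} is bounded above by $-\tfrac{\alpha}{2}\|y-x^{*}\|^2\le 0$, whence $f_{\alpha}(x^{*})\le 0$; together with $f_{\alpha}\ge 0$ this forces $f_{\alpha}(x^{*})=0$. Conversely, if $x\in C$ and $f_{\alpha}(x)=0$, the sharper bound gives $\tfrac{\alpha}{2}\|u-x\|^2\le 0$, so $x=u=H_{\alpha}(x)=P_C(x-\alpha^{-1}F(x))$, and Proposition \ref{p1} then identifies $x$ as a solution of VIP \eqref{VIP}. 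The optimization reformulation follows by combining both facts: $f_{\alpha}$ is nonnegative on $C$ with zero set equal to the solution set of \eqref{VIP}, so a point of $C$ minimizes $f_{\alpha}$ (with optimal value $0$) exactly when it solves \eqref{VIP}.

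I expect the only delicate point to be the sharper lower bound, where one must invoke the correct projection inequality and test it against $v=x$; the rest is bookkeeping. As an alternative I would mention that $y\mapsto \langle F(x),y-x\rangle+\tfrac{\alpha}{2}\|y-x\|^2$ is strongly convex with unique minimizer $u$ and minimum value $-f_{\alpha}(x)$, and since this map also vanishes at $y=x$, the equality $f_{\alpha}(x)=0$ forces $x=u$, giving the same conclusion.
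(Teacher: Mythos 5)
Your proof is correct. Note that the paper itself gives no proof of this lemma---it is quoted directly from Fukushima's Theorem~3.1---so there is no in-paper argument to compare against; your derivation (nonnegativity from the feasible choice $y=x$, the sharpened bound $f_{\alpha}(x)\ge\tfrac{\alpha}{2}\|H_{\alpha}(x)-x\|^{2}$ via the projection inequality tested at $v=x$, and Proposition~\ref{p1} to convert the fixed-point condition into a solution of \eqref{VIP}) is exactly the standard argument from the cited source, and every step checks out. One small point worth flagging: your argument relies on the closed form $f_{\alpha}(x)=\langle F(x),x-H_{\alpha}(x)\rangle-\tfrac{\alpha}{2}\|H_{\alpha}(x)-x\|^{2}$, which is the correct evaluation of the maximum; the second line of \eqref{merit} as printed is missing the square on the final norm, so you are (rightly) using the corrected version.
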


\begin{proposition}\label{p5}\rm{\cite[Proposition 3.4]{TajiA1993}}
Let $x^*$ be a solution to the variational inequality problem \eqref{VIP}. If $F$ is strongly monotone on $C$ with modulus $\mu$, then $f_{\alpha}$ satisfies:
\begin{equation*}
f_{\alpha}(x)\ge \left(\mu - \frac{\alpha}{2} \right)\| x-x^*\|^{2},  \  \forall \  x \in C .
\end{equation*}
If $\alpha < 2\mu$, then $f_{\alpha}(x) \to \infty$ as long as $x\in C$ and $ \| x \| \to \infty$ .
\end{proposition}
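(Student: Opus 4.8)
The plan is to use that $f_{\alpha}(x)$ is a maximum over all $y\in C$, so that plugging in any admissible $y$ produces a lower bound. Since $x^{*}\in C$, choosing $y=x^{*}$ in the defining maximization \eqref{merit} gives immediately
\begin{equation*}
f_{\alpha}(x)\ \ge\ -\langle F(x),x^{*}-x\rangle-\frac{\alpha}{2}\|x^{*}-x\|^{2}\ =\ \langle F(x),x-x^{*}\rangle-\frac{\alpha}{2}\|x-x^{*}\|^{2},
\end{equation*}
so the whole task reduces to bounding $\langle F(x),x-x^{*}\rangle$ from below by $\mu\|x-x^{*}\|^{2}$.

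To do this I would split the inner product as $\langle F(x),x-x^{*}\rangle=\langle F(x)-F(x^{*}),x-x^{*}\rangle+\langle F(x^{*}),x-x^{*}\rangle$. The first term is at least $\mu\|x-x^{*}\|^{2}$ by strong monotonicity of $F$ on $C$ (both $x$ and $x^{*}$ lie in $C$); the second term is nonnegative because $x^{*}$ solves VIP$(F,C)$ and $x\in C$, which is exactly the defining inequality \eqref{VIP}. Combining these two facts yields $\langle F(x),x-x^{*}\rangle\ge\mu\|x-x^{*}\|^{2}$, and substituting back into the displayed bound gives $f_{\alpha}(x)\ge\bigl(\mu-\tfrac{\alpha}{2}\bigr)\|x-x^{*}\|^{2}$ for every $x\in C$, which is the first assertion.

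For the coercivity claim, observe that when $\alpha<2\mu$ the coefficient $\mu-\tfrac{\alpha}{2}$ is strictly positive. Using the reverse triangle inequality $\|x-x^{*}\|\ge\|x\|-\|x^{*}\|$, the right-hand side of the bound tends to $+\infty$ as $x\in C$ with $\|x\|\to\infty$, hence so does $f_{\alpha}(x)$. There is no substantial obstacle in this argument: every step is either the single substitution $y=x^{*}$ in the maximand, an application of strong monotonicity, or a use of the VIP definition. The only point requiring care is the sign bookkeeping when rewriting $-\langle F(x),x^{*}-x\rangle$ as $\langle F(x),x-x^{*}\rangle$ and verifying that the maximand in \eqref{merit} is evaluated correctly at $y=x^{*}$.
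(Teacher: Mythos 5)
Your argument is correct and is exactly the standard proof of this error bound (the paper itself only cites \cite[Proposition 3.4]{TajiA1993} without reproducing the proof, and the cited proof proceeds the same way): substitute $y=x^{*}$ into the maximand of \eqref{merit}, split $\langle F(x),x-x^{*}\rangle$ via strong monotonicity and the defining inequality of VIP$(F,C)$ at $x^{*}$, and conclude; the coercivity claim then follows since $\mu-\alpha/2>0$. No gaps.
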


\section{Inexact Regularized Quasi-Newton Method}
\label{sec:temp}

\numberwithin{equation}{section}
In this section, we propose a globally convergent quasi-Newton method for solving monotone VIP\eqref{VIP}.

Assume that F is monotone. The idea to solve VIP\eqref{VIP} is that we adapt the framework proposed by \cite{solodov2000truly}. Moreover, the following modifications are added to algorithm. (i) We introduce a merit function to enable the algorithm to use unit steps in a local neighborhood of the solution. (ii) We also introduce BFGS quasi-Newton method to avoid calculation of the Jacobian matrix. The framework of the algorithm is given as follows.

\textbf{Step 1:} In each iteration, given $x_k$ and an approximate matrix $B_k$ for the Jacobian, we need to find a $z_k$ approximate by solving a linear variational inequality subproblem:
\begin{align}\label{subproblem}
\langle \varphi_k, x - z_k \rangle \ge 0, \  \forall x\in C, \tag{VIP$(\varphi_k, C)$}
\end{align}
where $\varphi_k = F(x_k + (B_k + \mu_k)(z_k - x_k)$, $\mu_k=\alpha\|x_k - P_{C}(x_k - F(x_k)/\alpha)\|$, $\alpha > 0$ is a real number. As the iteration proceeds and the sequence converges, the regularization term $\mu_k$ also converges to 0.
We solve \ref{subproblem} in an inexact way.
We use some quasi-Newton method to update $B_k$, for example, the BFGS update in \cite{2000LiDH}.
Here we need to ensure that $B_k$ remains semi-positive definite at each iteration.

Since $B_k+\mu_k I$ is always positive definite, the subproblem \ref{subproblem} always has a unique solution $z \in C$. Moreover, the linear subproblem \ref{subproblem} is always easier to solve than the original problem \eqref{VIP}.

\textbf{Step 2:} Construct a hyperplane and do the projection. Define the residual function by $e_k = z_k - P_{C}(z_k - \varphi(z_k)).$ We need the following result.
\begin{proposition}
Let $x^*$ be the solution of VIP \eqref{VIP}. Define
\begin{eqnarray*}
y_k = z_k - e_k,\ \upsilon_k := F(z_k - e_k) - \varphi_k(z_k) + e_k ,\ \varepsilon_k := -\upsilon_k - \mu_k(y_k - x_k).
\end{eqnarray*}If
\begin{align}\label{varepsilon}
\| \varepsilon_k \| \le \eta \mu_k \|y_k - x_k\|,\  \eta\in (0,1),
\end{align} then, the following holds
$$\langle \upsilon_k, x^* - y_k \rangle \le 0.$$
\end{proposition}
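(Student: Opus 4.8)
I want to prove that $\langle \upsilon_k, x^* - y_k \rangle \le 0$ under the inexactness condition \eqref{varepsilon}. The natural plan is to decompose $\upsilon_k$ using the definition of $\varepsilon_k$, namely $\upsilon_k = -\varepsilon_k - \mu_k(y_k - x_k)$, so that
\[
\langle \upsilon_k, x^* - y_k \rangle = -\langle \varepsilon_k, x^* - y_k \rangle - \mu_k \langle y_k - x_k, x^* - y_k \rangle.
\]
The first term is controlled by Cauchy–Schwarz together with \eqref{varepsilon}, giving $-\langle \varepsilon_k, x^* - y_k\rangle \le \|\varepsilon_k\|\,\|x^* - y_k\| \le \eta\mu_k\|y_k - x_k\|\,\|x^* - y_k\|$. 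So I would reduce everything to showing that the ``main'' term $-\mu_k\langle y_k - x_k, x^* - y_k\rangle$ is negative enough to absorb this error. If $\mu_k = 0$ the claim is trivial, so assume $\mu_k > 0$ and divide through by $\mu_k$.

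**The geometric heart of the argument.**

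Let me think about how $y_k$ and $x^*$ relate. Since $y_k = z_k - e_k = P_C(z_k - \varphi_k(z_k))$, and $\upsilon_k = F(y_k) - \varphi_k(z_k) + e_k = F(y_k) - (z_k - \varphi_k(z_k)) + y_k$, the projection characterization of $y_k$ tells us $\langle (z_k - \varphi_k(z_k)) - y_k,\ x - y_k\rangle \le 0$ for all $x \in C$; i.e. $\langle \varphi_k(z_k) - e_k,\ x - y_k \rangle \ge 0$, equivalently $\langle F(y_k) - \upsilon_k,\ x - y_k\rangle \ge 0$ for all $x \in C$. Taking $x = x^*$ and using $x^* \in C$ gives $\langle F(y_k),\ x^* - y_k\rangle \ge \langle \upsilon_k,\ x^* - y_k\rangle$. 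On the other hand, since $x^*$ solves VIP$(F,C)$ and $y_k \in C$, monotonicity of $F$ yields $\langle F(y_k),\ x^* - y_k\rangle \le \langle F(x^*),\ x^* - y_k\rangle \le 0$. Combining the two chains would give $\langle \upsilon_k, x^* - y_k\rangle \le 0$ \emph{directly}, \textbf{if} the inexact residual term were absent — so the real work is bookkeeping the $\varepsilon_k$ slack correctly through these inequalities rather than treating $\upsilon_k$ as exactly $F(y_k) - (\text{something in the normal cone})$.

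**Handling the regularization and inexactness together.**

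So more carefully: write $\upsilon_k + \varepsilon_k = -\mu_k(y_k - x_k)$, and start from $\langle F(y_k) - \upsilon_k,\ x^* - y_k\rangle \ge 0$, i.e. $\langle \upsilon_k, x^* - y_k\rangle \le \langle F(y_k), x^* - y_k\rangle \le 0$; this last step already finishes it without even needing \eqref{varepsilon}. I suspect the intended proof actually routes through the regularized subproblem at $x_k$ rather than the exact projection identity, precisely because $z_k$ only solves \ref{subproblem} \emph{inexactly}, so the clean identity $y_k = P_C(z_k - \varphi_k(z_k))$ may not hold on the nose and one must carry the defect. My plan is therefore: (1) expand $\langle \upsilon_k, x^* - y_k\rangle$ via $\upsilon_k = -\varepsilon_k - \mu_k(y_k - x_k)$; (2) bound the $\varepsilon_k$ piece by $\eta\mu_k\|y_k-x_k\|\cdot\|x^*-y_k\|$ using \eqref{varepsilon}; (3) bound $-\mu_k\langle y_k - x_k, x^* - y_k\rangle$ from above — this is where I'd use the definition $\upsilon_k = F(y_k) - \varphi_k(z_k) + e_k$ and the (possibly inexact) optimality of $y_k$ and $z_k$, plus monotonicity of $F$ and the VIP property of $x^*$, to show this term is $\le -(1)\mu_k\|y_k - x_k\|\,\|x^* - y_k\|$ or something with a coefficient strictly larger than $\eta$; (4) add (2) and (3) and use $\eta < 1$ to conclude the sum is $\le 0$.

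**Expected obstacle.**

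The main obstacle is step (3): getting the right sign and a coefficient on $\mu_k\|y_k-x_k\|\,\|x^*-y_k\|$ that dominates $\eta$. This requires carefully unwinding which variational inequality $z_k$ satisfies (the inexact \ref{subproblem} with operator $\varphi_k = F(x_k + (B_k+\mu_k I)(z_k - x_k))$ — note the paper's notation is slightly inconsistent between $\varphi_k$ and $\varphi_k(z_k)$, which I'd need to reconcile) and combining it with the monotonicity of $F$, the positive semidefiniteness of $B_k$, and the VIP characterization of $x^*$ so that the cross terms involving $B_k(z_k - x_k)$ drop out or contribute with a favorable sign. I expect the inequality $\langle y_k - x_k, x^* - y_k\rangle \le \langle y_k - x_k, x^* - x_k\rangle - \|y_k - x_k\|^2$ together with a sign analysis of $\langle y_k - x_k, x^* - x_k\rangle$ (coming from the subproblem) to be the technical core.
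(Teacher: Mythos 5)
Your ``geometric heart'' paragraph already \emph{is} the proof, and it is essentially the paper's own argument: the paper phrases the same chain as $h_k:=-\varphi_k(z_k)+e_k\in N_C(y_k)$ (from the projection characterization of $y_k=z_k-e_k=P_C(z_k-\varphi_k(z_k))$), hence $\upsilon_k=F(y_k)+h_k\in(F+N_C)(y_k)$, and then concludes by monotonicity of $F+N_C$ together with $0\in(F+N_C)(x^*)$; your version unpacks that monotonicity into ``projection inequality at $x=x^*$'' plus ``monotonicity of $F$'' plus ``$x^*$ solves the VIP,'' which is the same thing. You should trust this argument and drop the hedging: the identity $y_k=P_C(z_k-\varphi_k(z_k))$ holds \emph{exactly by construction}, since $e_k$ is \emph{defined} as $z_k-P_C(z_k-\varphi_k(z_k))$ --- the inexactness of $z_k$ as a solution of the subproblem is irrelevant to that identity, so there is no defect to carry. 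Your observation that the hypothesis $\|\varepsilon_k\|\le\eta\mu_k\|y_k-x_k\|$ is not needed for the stated conclusion is also correct; in the paper that condition is used, inside the same proof, only to establish the companion separation inequality $\langle\upsilon_k,x_k-y_k\rangle>0$, which is what makes the hyperplane $H_k$ actually separate $x_k$ from $x^*$. The alternative plan you sketch afterwards (expanding $\upsilon_k=-\varepsilon_k-\mu_k(y_k-x_k)$ and trying to dominate the error term) should be abandoned: its step (3) requires bounding $-\mu_k\langle y_k-x_k,\,x^*-y_k\rangle$ above by a negative multiple of $\mu_k\|y_k-x_k\|\,\|x^*-y_k\|$, but that inner product has no controllable sign (nothing forces $x^*$ to lie on a particular side of $y_k$ relative to $x_k$), so the ``expected obstacle'' you flag is a genuine dead end rather than a technicality to be worked out.
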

\begin{proof}
By the property of projection, it holds that
$$
\langle z_k - \varphi_k(z_k) - (z_k - e_k), x - (z_k - e_k)\rangle \le 0.
$$
Therefore, $\langle -\varphi_k(z_k) + e_k , x - (z_k - e_k)\rangle \le 0,\  \forall x\in \mathbb{R}^n.
$
Then
$$h_k := -\varphi_k(z_k) + e_k \in N_{C}(z_k - e_k).
$$
By the definition of $y_k$ and $\upsilon_k$, it holds that
$$\upsilon_k \in (F+N_{C})(z_k - e_k).$$ Then by Cauchy-Schwarz inequality we have:
\begin{align}\label{3.3}
\langle \upsilon_k , x_k - y_k\rangle &= \mu_k \|x_k - y_k\|^2 - \langle \varepsilon_k , x_k - y_k\rangle \nonumber \\
&\ge \mu_k\|x_k - y_k\|^2 - \| \varepsilon_k\|\|x_k - y_k\|^2 > 0 .
\end{align}
Define the hyperplane:
\begin{align}\label{hyperplane}
H_k := \{x | \  \langle \upsilon_k ,x - y_k\rangle = 0\}.
\end{align}
By the monotonicity of $F + N_{C}$ \cite{rockafellar1969convex}, we know that $\langle \upsilon_k, x^{*} - y_k \rangle \le 0$.
\end{proof}

\begin{remark}
It is clear that if \eqref{varepsilon} holds, then the hyperplane \eqref{hyperplane} can separate the current iteration point $x_k$ and the solution $\bar{x}$, and projecting $x_k$ onto this hyperplane can reduce the distance between the iteration point and the solution. Otherwise, we will introduce a line search strategy to find the next iteration.
\end{remark}

\textbf{Step 3:} We use a step size strategy based on merit function. If $z_k$ can make the merit function strictly decrease, we use $z_k$ directly. Otherwise, we project and correct $x_k$ to obtain a new iteration point that is closer to the solution of problem (\ref{VIP}).

For the solution $x^{*}$ of problem \eqref{VIP}, it is clear that $$\langle F(x^{*}), x^{*} - y_k \rangle \ge 0.$$
Therefore, by the monotonicity of the operator $F$, we have
$$
\langle F(x^{*}) , x^{*} - y_k \rangle \le 0.
$$
In this case, the hyperplane obtained is
$$H_k :=\{x\in \mathbb{R}^n | \langle F(y_k),x-y_k\rangle = 0\}.$$
Similarly, using hyperplane projection can reduce the distance between the iteration point and the solution. In this situation, we have
$$-\alpha_k \langle F(y_k), z_k - x_k \rangle = \langle F(y_k), x_k - y_k \rangle \ge 0 .$$ If $\alpha_k \le 1$, then $$\langle F(y_k), z_k - y_k \rangle = (1 - \alpha_k) \langle F(y_k), z_k - x_k \rangle \le 0.$$
From a geometric perspective, it is possible that $z_k$ is closer to $x^*$ than $y_k$. Therefore, we can use this property to construct an algorithm.

It is worth noting that the algorithm in \cite{solodov2000truly} requires projection correction for $x_k$ in each iteration. By using of our step size strategy, we can achieve automatic use of pure Newton step.

The details of the algorithm are given in Algorithm \ref{algorithm}.

\begin{algorithm}
\caption{Inexact Regularized Quasi-Newton Algorithm (IRQN)}\label{algorithm}
\textbf{S0. } $x_0 \in C, \eta \in (0,1), \lambda \in (0,1), \beta \in (0,1), \gamma \in (0,1), \alpha>0, h>0, r>0,$
$ \epsilon>0, \mu_0>0, \rho_0 <1, B_0 \in \mathbb{R}^{n\times n}$, $\{\mu_k \}\downarrow 0$, $\{\rho_k \}\downarrow 0$

\textbf{S1.}  Stop if $\alpha \|x_k - P_{C}(x_k - F(x_k)/\alpha)\| \le \epsilon$. Otherwise, let $\mu_k \to 0$ and $\rho_k \to 0$, let $e_k = z_k - P_{C}(z_k - \varphi_k(z_k))$, calculate an inexact solution $z_k$ of \ref{subproblem} by some current algorithm, such that
\begin{align}\label{algorithm-error1}
\|e_k\| \le \rho_k \mu_k \|z_k - x_k\|,\  \langle e_k, \varphi_k(z_k) + z_k - x_k \rangle \le \rho_k \mu_k \|z_k - x_k\|^2.
\end{align}
If $z_k = x_k$, stop. Otherwise, go to \textbf{S2}.

\textbf{S2.} If
\begin{align} \label{newtonstep}
f_{\alpha}(z_k) \leq \gamma f_{\alpha}(x_k),
\end{align}
set $x_{k+1} = z_k$, otherwise go to \textbf{S3}.

\textbf{S3.}  Let $\upsilon_k:=F(z_k - e_k) - \varphi_k(z_k) + e_k,\  y_k:=z_k - e_k$, and $\varepsilon_k = -\upsilon_k - \mu_k(y_k - x_k)$. If
\begin{align}\label{algorithm-error3}
\|\varepsilon_k\| \le \eta \mu_k \|y_k - x_k\|,
\end{align}
go to \textbf{S4}, otherwise let $m_k$ be the smallest integer satisfying the following inequality:
\begin{align}\label{algorithm-linesearch}
\langle F(x_k + \beta^{m}(z_k - x_k)), x_k - z_k\rangle \geq \lambda(1-\rho_k)\|z_k-x_k\|^2.
\end{align}
Let $\alpha_k = \beta^{m_k}, y_k = x_k + \alpha_k (z_k - x_k)$, and $\upsilon_k = F(y_k)$. Go to \textbf{S4}.

\textbf{S4.}
\begin{align}\label{algorithm-project}
\hat{x}_k = x_k - \frac{\langle \upsilon_k,x_k - y_k\rangle}{\|\upsilon_k\|^2}\upsilon_k, \quad x_{k+1} = P_{C}(\hat{x}_k).
\end{align}

\textbf{S5.} Let $B_{k+1}$ updated by quasi-Newton method. Let $k = k+1$. Go to \textbf{S1}.
\end{algorithm}

\begin{remark}
The main difference between this algorithm and the algorithm in \cite{solodov2000truly} is the use of merit function (\ref{newtonstep}) and the quasi-Newton update. The main computational burden of the algorithm lies in solving the subproblems and line search.

It can be easily seen from \cite{solodov2000truly} that the line search (\ref{algorithm-linesearch}) in Algorithm \ref{algorithm} is well defined.
\end{remark}

\section{Global Convergence}
\numberwithin{equation}{section}
We need the following assumption.
\begin{assumption}\label{assumption-1}
The function $F$ is locally Lipschitz continuous in a neighborhood $X$ of $x^{*}$, i.e. $\|F(x) - F(y)\| \le L_1\|x-y\|, \  \forall x, y\in X,$
where $L_1$ is a positive constant.
\end{assumption}

The following theorem shows that Algorithm \ref{algorithm} is globally convergent.
\begin{theorem}\label{lemma-global}
Assume that $C$ is a nonempty, closed, and convex subset of $\mathbb{R}^n$, and $F$ is monotone and continuous on $C$. Assume that $B_k$ is positive definite for all $k>0$, and let $\{x_k\}$ be a sequence generated by Algorithm \ref{algorithm} starting from an arbitrary initial point. Then, $\{x_k\}$ is bounded. Let $r(x_k)=\|x_k - P_{C}(x_k-F(x_{k}))\|$. Let Assumption \ref{assumption-1} hold. Assume that there exist constants $C_1,C_2 > 0$ such that for some $k > k_0$,
\begin{align}\label{mu_k}
C_1 \ge \mu_k \ge C_2 \|r(x_k)\|.
\end{align}
If $\min{(1,1/C_1)} > \sup_{k}\rho_k$ and $ \rho_k \to 0$ , then $\{x_k\}$ converges to a solution $x^{*}$ of VIP(F,C).
\end{theorem}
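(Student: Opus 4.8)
The plan is to exploit the two‑case structure of the iteration: call $k$ a \emph{merit step} if \eqref{newtonstep} holds (so $x_{k+1}=z_k$) and a \emph{projection step} otherwise (so $x_{k+1}=P_C(\hat x_k)$ is built from \eqref{algorithm-project}). First I would handle the projection steps. In both sub‑cases of \textbf{S3}--\textbf{S4} one has the two separation inequalities
$$\langle \upsilon_k, x^*-y_k\rangle\le 0,\qquad \langle \upsilon_k, x_k-y_k\rangle>0$$
for every solution $x^*$ of \eqref{VIP}: the first follows from the Proposition proved above when \eqref{algorithm-error3} holds, and from monotonicity of $F$ (namely $\langle F(y_k),y_k-x^*\rangle\ge\langle F(x^*),y_k-x^*\rangle\ge0$) in the line search case; the second follows from \eqref{3.3} together with \eqref{algorithm-error3}, respectively from the line search rule \eqref{algorithm-linesearch} and the identity $x_k-y_k=\alpha_k(x_k-z_k)$. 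Combining these with the Pythagoras identity for the projection of $x_k$ onto the hyperplane $H_k$ and with nonexpansiveness of $P_C$ (using $x^*\in C$) gives, for every projection step,
\begin{equation}\label{plan-fejer}
\|x_{k+1}-x^*\|^2\le\|x_k-x^*\|^2-\Big(\frac{\langle\upsilon_k,x_k-y_k\rangle}{\|\upsilon_k\|}\Big)^2 .
\end{equation}

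Next, boundedness. Along projection steps \eqref{plan-fejer} already gives $\|x_{k+1}-x^*\|\le\|x_k-x^*\|$. Along a merit step I would estimate $\|z_k-x^*\|$: from $-\varphi_k(z_k)+e_k\in N_C(z_k-e_k)$ (the inclusion established in the Proposition above), $0\in(F+N_C)(x^*)$, monotonicity of $F$ and of $N_C$, positive semidefiniteness of $B_k$, and the inexactness bounds \eqref{algorithm-error1}, one derives an inequality of the form $\mu_k\|z_k-x^*\|^2\le \kappa\,\|x_k-x^*\|\,\|z_k-x^*\|+(\text{terms controlled by }\|e_k\|)$ with $\kappa$ bounded on bounded regions, hence $\|z_k-x^*\|\le\theta_k\|x_k-x^*\|$ with $\theta_k$ locally bounded. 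Tracking this over the (possibly empty) runs of consecutive merit steps between two projection steps, and using that projection steps never increase $\|x_k-x^*\|$, keeps $\{x_k\}$ inside a bounded set; I expect the bookkeeping that controls these runs to be the first delicate point, since a merit step can a priori move the iterate away from $x^*$.

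Finally, convergence under \eqref{mu_k}, $\sup_k\rho_k<\min(1,1/C_1)$, $\rho_k\to0$ and Assumption \ref{assumption-1}. I would split on whether the set of projection steps is infinite. If it is, I would show $\liminf_k\|r(x_k)\|=0$: assuming $\|r(x_k)\|\ge\delta>0$, the coefficient in \eqref{plan-fejer} is bounded below on projection steps (in the \eqref{algorithm-error3} sub‑case by $\big((1-\eta)/(1+\eta)\big)^2\|x_k-y_k\|^2$ after the $\mu_k$'s cancel, and in the line search sub‑case by a fixed multiple of $\|z_k-x_k\|^4$ since $\|F(y_k)\|$ stays bounded), so summing \eqref{plan-fejer}---once the merit steps have been shown not to spoil the telescoping---forces $\|z_k-x_k\|\to0$ and $\|e_k\|\to0$ along projection steps; letting $k\to\infty$ in $z_k-e_k=P_C(z_k-\varphi_k(z_k))$ and using $\varphi_k(z_k)-F(x_k)\to0$, $z_k-x_k\to0$ then yields $\|r(x_k)\|\to0$, contradicting $\|r(x_k)\|\ge\delta$. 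If instead the set of projection steps is finite, then for all large $k$ the step is a merit step, so $f_\alpha(x_k)$ is reduced by the factor $\gamma$ at each step and $f_\alpha(x_k)\to0$; continuity of $f_\alpha$ and Lemma \ref{lemma3} make every cluster point a solution, while the subproblem estimate together with $f_\alpha(x_k)\to0$ gives $\|x_{k+1}-x_k\|=\|z_k-x_k\|\to0$ and in fact summability, so $\{x_k\}$ converges. In the remaining case, fixing a cluster point $x^*$ attained along projection steps with $r(x^*)=0$ (hence a solution), \eqref{plan-fejer} holds with this $x^*$; using Assumption \ref{assumption-1}, the lower bound $\mu_k\ge C_2\|r(x_k)\|$ and $\sup_k\rho_k<\min(1,1/C_1)$, one shows that once $x_k$ is near $x^*$ the merit steps also satisfy $\|x_{k+1}-x^*\|\le\|x_k-x^*\|$, so $\{\|x_k-x^*\|\}$ is eventually nonincreasing and therefore convergent; since a subsequence tends to $0$, the whole sequence converges to $x^*$.

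The main obstacle throughout is the interaction between the merit steps and the projection steps: a merit step gives geometric decrease of $f_\alpha$ but no a priori control of $\|x_{k+1}-x^*\|$, whereas a projection step contracts $\|x_{k+1}-x^*\|$ but need not decrease $f_\alpha$. Reconciling the two---so that neither family of steps can undermine the progress made by the other, both for boundedness and for convergence of the whole sequence near a cluster point---is exactly where Assumption \ref{assumption-1}, the two–sided control \eqref{mu_k} of $\mu_k$, and the smallness of $\rho_k$ are used.
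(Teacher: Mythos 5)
Your overall strategy---Fej\'er monotonicity of $\|x_k-x^*\|$ on the projection steps combined with geometric decrease of $f_\alpha$ on the merit steps, followed by a case split on which type of step occurs infinitely often---is the same two-mechanism argument the paper itself uses (the paper splits on whether \eqref{newtonstep} holds infinitely or finitely often and defers the all-projection-steps case to Solodov--Svaiter). Your treatment of the projection steps is correct: the two separation inequalities, the halfspace-projection identity, and nonexpansiveness of $P_C$ do give the Fej\'er decrease you state. You are also right, and more candid than the paper, that the entire difficulty is the mixed case in which the two step types interleave: the paper's assertion that infinitely many merit steps force $f_\alpha(x_k)\to 0$ is only valid if eventually \emph{every} step is a merit step, since a projection step can increase $f_\alpha$.

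However, the two places where you flag ``delicate bookkeeping'' are genuine gaps that your plan does not close. First, for boundedness you derive $\|z_k-x^*\|\le\theta_k\|x_k-x^*\|$ with $\theta_k$ of order $\kappa/\mu_k$; since $\mu_k$ is only bounded below by $C_2\|r(x_k)\|$, which can be arbitrarily small, $\theta_k$ can exceed $1$ by an arbitrary amount, and the product of the $\theta_k$ over a run of consecutive merit steps is uncontrolled, so boundedness does not follow from this estimate. (Nor does $f_\alpha(x_{k+1})\le\gamma f_\alpha(x_k)$ rescue it: under mere monotonicity $f_\alpha$ is not coercive---Proposition \ref{p5} needs strong monotonicity---so small $f_\alpha$ does not bound $\|x_k\|$.) Second, in your ``remaining case'' you claim that near a cluster point $x^*$ the merit steps also satisfy $\|x_{k+1}-x^*\|\le\|x_k-x^*\|$. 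Under the hypotheses of this theorem there is no justification for that: on a merit step $x_{k+1}=z_k$ solves a regularized linearized subproblem with $B_k$ only assumed positive definite, and the estimates that would make $\|z_k-x^*\|$ small relative to $\|x_k-x^*\|$ (Theorem \ref{lemma-localconverge}) require Assumptions \ref{assumption-2}--\ref{assumption-3} and $\|B_k-\nabla F(x_k)\|\le D$, none of which are assumed here. Without one of these two missing pieces---for instance a bound of the form $\|z_k-x^*\|\le\|x_k-x^*\|+\epsilon_k$ with $\sum_k\epsilon_k<\infty$ on merit steps, or an argument that the interleaved case cannot occur---neither boundedness nor whole-sequence convergence is established in the mixed case, which is precisely the case the paper's own two-sentence proof also leaves unaddressed.
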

\begin{proof}
Firstly, consider the case where \eqref{newtonstep} holds for infinitely many times, i.e., $f_{\alpha}(z_k) \le \gamma f_{\alpha}(x_k)$. Since $\gamma \in (0,1)$, it follows that $\lim\limits_{k\to \infty}f_{\alpha}(x_k) = 0.$ By Lemma \ref{lemma3} and the continuity of $f_{\alpha}(x)$, the iteration converges to a point $x^{*}$, which is a solution of VIP(F,C).
Thus, the result holds.

Now consider the case where \eqref{newtonstep} holds only finitely many times. In this case, the proof follows a similar method to that in \cite{solodov2000truly}, and we omit the details here.
\end{proof}
\begin{remark}
We can choose an appropriate parameter sequence, such as $\{ \mu_k \} = O (r_k)$, to satisfy \eqref{mu_k}. Similarly, the sequence $\{\rho_k\}$ can be chosen by requiring that the update of the quasi-Newton matrix ensures that $B_k$ remains positive semidefinite.
\end{remark}

\section{Local Convergence}
\numberwithin{equation}{section}
Next, we analyze the local convergence of the algorithm. We need the following results and assumptions.

\begin{lemma}\label{lemma-localineq} \rm {\cite[Theorem 3.1]{doi:10.1287/moor.12.3.474} }
Let $\hat{z}$ be a solution of the following linear variational inequality: $\hat{z}\in C, \  \forall x\in C,\  \langle Mz+q,x- \hat{z}\rangle \ge 0$.
Here, $M$ is a positive definite matrix, and $q$ is a vector. Then, for any $z\in C$, we have
$$\|z-\hat{z}\| \le \frac{1+\|M\|}{c_k}\|z - P_{C}(z-(Mz+q))\|,$$
where $c_k$ is the smallest eigenvalue of $(M+M^{\top})/2$.
\end{lemma}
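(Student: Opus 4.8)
The plan is to derive the error bound directly from the variational characterisation of the projection together with the defining inequality of $\hat z$, without appealing to any machinery beyond Cauchy--Schwarz and positive definiteness. Write $r := z - P_C(z - (Mz+q))$ for the natural residual and set $u := P_C(z - (Mz+q)) = z - r$, noting that $u \in C$. The projection identity characterising $u$ is equivalent to $\langle z - (Mz+q) - u,\ x - u\rangle \le 0$ for all $x \in C$; substituting $u = z - r$, this reads $\langle r - (Mz+q),\ x - u\rangle \le 0$ for all $x \in C$.

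Next I would specialise this projection inequality at the feasible point $x = \hat z \in C$, and separately specialise the linear VI defining $\hat z$, namely $\langle M\hat z + q,\ x - \hat z\rangle \ge 0$, at the feasible point $x = u \in C$. Adding the two resulting inequalities cancels the vector $q$, and after rewriting $\hat z - u = -(z - \hat z) + r$ the whole relation collapses to an inequality in the single direction $d := z - \hat z$ and the residual $r$, of the form $\langle Md,\ d\rangle + \|r\|^2 \le \langle Md,\ r\rangle + \langle r,\ d\rangle$.

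I would then finish by estimating the two sides. On the left, use $\langle Md, d\rangle = \langle \frac{1}{2}(M+M^\top)d,\ d\rangle \ge c_k \|d\|^2$ and discard the nonnegative term $\|r\|^2$; on the right, apply Cauchy--Schwarz and submultiplicativity of the operator norm to get $\langle Md, r\rangle + \langle r, d\rangle \le (\|M\| + 1)\,\|d\|\,\|r\|$. Combining gives $c_k \|d\|^2 \le (1 + \|M\|)\,\|d\|\,\|r\|$, and dividing by $c_k\|d\|$ (the claim being trivial when $d = 0$) yields exactly $\|z - \hat z\| \le \frac{1+\|M\|}{c_k}\|z - P_C(z - (Mz+q))\|$.

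The only genuine care required is the sign bookkeeping when the projection inequality is added to the VI for $\hat z$, together with the observation that the projected point $u$ is itself feasible and may therefore legitimately be used as the test vector; positive definiteness of $M$ ensures $c_k > 0$, so the final division is valid. I expect this sign/feasibility bookkeeping to be the main (and only mild) obstacle; everything else is routine. Since the statement is quoted verbatim from \cite{doi:10.1287/moor.12.3.474}, one may alternatively just invoke that reference, but the argument above reproduces it self-containedly.
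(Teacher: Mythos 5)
Your proof is correct. The paper itself offers no proof of this lemma---it is quoted directly from Pang's result \cite{doi:10.1287/moor.12.3.474}---and your self-contained derivation (test the projection inequality at $\hat z$, test the VI at the projected point $u=z-r$, add, and bound $\langle Md,d\rangle\ge c_k\|d\|^2$ from below and $\langle Md,r\rangle+\langle r,d\rangle\le(1+\|M\|)\|d\|\|r\|$ from above) is exactly the standard argument behind that reference, with the sign bookkeeping done correctly. Note only that you have implicitly (and rightly) read the VI defining $\hat z$ as $\langle M\hat z+q,\,x-\hat z\rangle\ge 0$; the ``$Mz+q$'' in the statement is a typo.
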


\begin{lemma}\label{lemma-project}
Assume that $F$ is Lipschitz continuous on set $C$ with $\alpha >0$. Then, there exists a constant $C>0$ such that $\|x - P_{C}(x - F(x))\| \le C \|x - x^{*}\| $,
where $x^{*}$ is a solution of the VIP \eqref{VIP}.
\end{lemma}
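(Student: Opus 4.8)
The plan is to exploit the fact that the natural residual map $r(x) := x - P_{C}(x - F(x))$ vanishes exactly at solutions of the VIP, and then to bound $\|r(x)\| = \|r(x) - r(x^{*})\|$ by a chain of triangle inequalities, using only the nonexpansiveness of $P_{C}$ and the Lipschitz continuity of $F$.

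First I would invoke Proposition \ref{p1} with $\alpha = 1$: since $x^{*}$ solves \eqref{VIP}, we have $x^{*} = P_{C}(x^{*} - F(x^{*}))$, i.e. $r(x^{*}) = 0$. Hence $\|x - P_{C}(x - F(x))\| = \|r(x) - r(x^{*})\|$, and it suffices to estimate this difference. Writing $r(x) - r(x^{*}) = (x - x^{*}) - \bigl(P_{C}(x - F(x)) - P_{C}(x^{*} - F(x^{*}))\bigr)$ and applying the triangle inequality gives $\|r(x) - r(x^{*})\| \le \|x - x^{*}\| + \|P_{C}(x - F(x)) - P_{C}(x^{*} - F(x^{*}))\|$. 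By the nonexpansiveness of the projection (stated right after Proposition \ref{p1}), the second term is at most $\|(x - F(x)) - (x^{*} - F(x^{*}))\| \le \|x - x^{*}\| + \|F(x) - F(x^{*})\|$, and the Lipschitz continuity of $F$ with some constant $L$ bounds $\|F(x) - F(x^{*})\|$ by $L\|x - x^{*}\|$. Combining these estimates yields $\|x - P_{C}(x - F(x))\| \le (2 + L)\|x - x^{*}\|$, so the claim holds with $C := 2 + L$.

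There is essentially no serious obstacle here; the argument is a short composition of triangle inequalities with the nonexpansiveness property. The only points requiring a little care are bookkeeping ones: if $F$ is only locally Lipschitz near $x^{*}$ (as in Assumption \ref{assumption-1}) the inequality should be read as valid for $x$ in that neighborhood, whereas if $F$ is Lipschitz on all of $C$ the bound is global. One should also observe that the parameter $\alpha$ plays only a cosmetic role: rerunning the computation with $P_{C}(x - F(x)/\alpha)$ in place of $P_{C}(x - F(x))$ reproduces the same argument with $L$ replaced by $L/\alpha$, so a constant of the form $C := 2 + L/\alpha$ works in that variant.
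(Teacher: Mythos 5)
Your proposal is correct and follows essentially the same route as the paper's own proof: write the residual as a difference from the vanishing residual at $x^{*}$, apply the triangle inequality, the nonexpansiveness of $P_{C}$, and the Lipschitz bound, arriving at the constant $2+L$ (the paper writes $2+\alpha$ because in the lemma statement $\alpha$ denotes the Lipschitz constant). Your closing remarks on the local-versus-global reading and on the scaled projection $P_{C}(x-F(x)/\alpha)$ are accurate but not needed for the statement as given.
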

\begin{proof}
\begin{align*}
\|x_k - P_{C}(x - F(x_k))\| &= \|x- P_{C}(x - F(x)) - x^* - P_{C}(x^* - F(x^*)) \| \\
&\le \|x - x^*\| + \|P_{C}(x - F(x)) - P_{C}(x^* - F(x^*))\| \\
&\le \|x - x^*\| + \|(x - F(x)) - (x^* - F(x^*))\| \\
&\le (2 + \alpha)\| x - x^* \|.
\end{align*}
Let $C = (2 + \alpha)$, the proof is finished.
\end{proof}

\begin{assumption}\label{assumption-2}
The matrix $\nabla F$ is Lipschitz continuous in a local neighborhood $X$ of $x^{*}$, i.e., $\|\nabla F(x) - \nabla F(y)\| \le L_2\|x-y\|, \  \forall \  x\in X,$
where $L_2$ is a positive constant.
\end{assumption}

\begin{assumption}\label{assumption-3}
There exists a constant $\mu>0$ in a local neighborhood $X$ of $x^{*}$ such that for any $d\in X$, it holds that $\langle \nabla F(x)d,d\rangle \ge \mu\|d\|^2$.
\end{assumption}

\begin{theorem}\label{lemma-localconverge}
Assume that $F$ is continuously differentiable and that Assumptions \ref{assumption-1}, \ref{assumption-2}, and \ref{assumption-3} hold in a local neighborhood of a solution $x^{*}$ of VIP(F,C). Let $\{x_k\}$ be generated by Algorithm \ref{algorithm} and let $\{z_k\}$ be the sequence of inexact solutions of \ref{subproblem} in Algorithm \ref{algorithm}, and let $\{\hat{z}_k\}$ be the sequence of accurate solutions of \ref{subproblem}. Assume that there exists a constant $D>0$ such that $\|B_k - \nabla F(x_k)\| \le D$. Furthermore, assume that conditions (i), (ii), (iii) hold, where

\begin{description}
\item[(i)] $\mu_k = O (r_k)$, $\mu_k \le C_1\|x_k - x^*\|$, $C_1 > 0$, $\ \rho_k \to 0$;
\item[(ii)] $\frac{1+\|B_k + \mu_kI\|}{c_k} \le C_2 $, where $c_k$ is the minimum eigenvalue of $B_k+\mu_kI$, $C_2 > 0$;
\item[(iii)] For sufficiently large $k$, $\|(B_k - \nabla F(x_k))(z_k - x_k)\| \le \delta_k \|z_k - x_k\|$ where $\{\delta_k\}$ is a positive sequence, and $\mu_{k} < \frac{1}{2}\mu$, $\delta_k \le \frac{1}{16}\mu$.
\end{description}
Then, for sufficiently large $k$, there exists a local neighborhood $X$ of $x^{*}$ such that for any $x_k\in X$, we have
\begin{align}\label{4.4}
\begin{cases}
\|\hat{z}_k - x^{*}\| \le \frac{L_2 + 14C_1 + 2D}{2\mu}\|x_k - x^{*}\|^2 + 16\delta_k\|x_k - x^{*}\|,\\
\|z_k - x^{*}\|\le \frac{L_2 + 12C_1 + 2D}{2\mu}\|x_k - x^{*}\|^2 + 16\delta_k\|x_k - x^{*}\|.
\end{cases}
\end{align}
\end{theorem}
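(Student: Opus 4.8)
The plan is to derive a Newton‑type quadratic estimate for the (near‑)exact subproblem solution and then deduce both inequalities of \eqref{4.4} from it. Write $\Phi_k(x):=F(x_k)+(B_k+\mu_kI)(x-x_k)$, so that $\hat{z}_k$ solves $\langle\Phi_k(\hat{z}_k),x-\hat{z}_k\rangle\ge0$ for all $x\in C$, $\varphi_k(z_k)=\Phi_k(z_k)$, and --- by the projection characterization of $e_k$, exactly as in the proposition of Section~\ref{sec:temp} --- $y_k:=z_k-e_k$ solves the perturbed inequality $\langle\varphi_k(z_k)-e_k,x-y_k\rangle\ge0$ on $C$, equivalently $\upsilon_k:=F(y_k)-\varphi_k(z_k)+e_k\in(F+N_C)(y_k)$.

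\emph{Preliminary linear bounds.} I would first show that, for $k$ large, $\|\hat{z}_k-x_k\|$, $\|z_k-x_k\|$ and $\|\hat{z}_k-x^*\|$ are $O(\|x_k-x^*\|)$, while $\|e_k\|$ and $\|\hat{z}_k-z_k\|$ are $O(\|x_k-x^*\|^2)$. Indeed, Lemma~\ref{lemma-localineq} with test point $z=x_k$ (and $\Phi_k(x_k)=F(x_k)$) gives $\|x_k-\hat{z}_k\|\le C_2\,r(x_k)$ by condition~(ii), and Lemma~\ref{lemma-project} (using Assumption~\ref{assumption-1}) bounds $r(x_k)$ by a constant times $\|x_k-x^*\|$; Lemma~\ref{lemma-localineq} with $z=z_k$ gives $\|z_k-\hat{z}_k\|\le C_2\|e_k\|$, and \eqref{algorithm-error1} together with $\mu_k\le C_1\|x_k-x^*\|$ makes $\|e_k\|$, hence $\|z_k-\hat{z}_k\|$, quadratic in $\|x_k-x^*\|$. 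I would also shrink the neighborhood so that $x_k,z_k,\hat{z}_k,y_k$ and the segments joining them to $x^*$ lie where Assumptions~\ref{assumption-2} and~\ref{assumption-3} hold.

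\emph{Core estimate.} Since $\upsilon_k\in(F+N_C)(y_k)$ and $0\in(F+N_C)(x^*)$ by Proposition~\ref{pro-2-1}, the (local) strong monotonicity of $F+N_C$ coming from Assumption~\ref{assumption-3} via the mean value theorem yields $\langle\upsilon_k,y_k-x^*\rangle\ge\mu\|y_k-x^*\|^2$, hence $\mu\|y_k-x^*\|\le\|\upsilon_k\|$ by Cauchy--Schwarz. Expanding
$$
\upsilon_k=\bigl[F(z_k)-F(x_k)-\nabla F(x_k)(z_k-x_k)\bigr]+\bigl[F(y_k)-F(z_k)\bigr]-(B_k-\nabla F(x_k))(z_k-x_k)-\mu_k(z_k-x_k)+e_k,
$$
I would bound the five pieces respectively by $\tfrac{L_2}{2}\|z_k-x_k\|^2$ (Assumption~\ref{assumption-2}), $L_1\|e_k\|$ (Assumption~\ref{assumption-1}), $\delta_k\|z_k-x_k\|$ (condition~(iii), applied directly to the direction $z_k-x_k$), $\mu_k\|z_k-x_k\|\le C_1\|x_k-x^*\|\,\|z_k-x_k\|$ (condition~(i)), and $\|e_k\|$. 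Inserting the preliminary bounds $\|z_k-x_k\|=O(\|x_k-x^*\|)$ and $\|e_k\|=O(\|x_k-x^*\|^2)$ turns the right‑hand side into a multiple of $\|x_k-x^*\|^2$ plus a multiple of $\delta_k\|x_k-x^*\|$, which bounds $\|y_k-x^*\|$; then $\|z_k-x^*\|\le\|e_k\|+\|y_k-x^*\|$ and $\|\hat{z}_k-x^*\|\le\|\hat{z}_k-z_k\|+\|z_k-x^*\|$ give the two lines of \eqref{4.4}. (The bound for $\hat{z}_k$ may alternatively be obtained by running the same argument with $\hat{z}_k$ in place of $y_k$, in which case $(B_k-\nabla F(x_k))(\hat{z}_k-x_k)$ is split as $(B_k-\nabla F(x_k))(z_k-x_k)+(B_k-\nabla F(x_k))(\hat{z}_k-z_k)$ and the second piece bounded by $D\|\hat{z}_k-z_k\|$ via $\|B_k-\nabla F(x_k)\|\le D$; this is where the term $2D$ in \eqref{4.4} originates.)

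\emph{Main obstacle.} The delicate point is that condition~(iii) controls $B_k-\nabla F(x_k)$ only along the single direction $z_k-x_k$, so every occurrence of this operator acting on a different increment (notably $\hat{z}_k-x_k$ or $\hat{z}_k-z_k$) must be reduced to that direction at the cost of correction terms --- controlled by \eqref{algorithm-error1} and the smallness of $\mu_k$, $\rho_k$ --- and these corrections are precisely what make the two quadratic constants in \eqref{4.4} differ and bring in the term $2D$. Once the right‑hand side is in the form $(\text{const})\|x_k-x^*\|^2+16\delta_k\|x_k-x^*\|$, obtaining the displayed constants is routine bookkeeping: one collects the $\|x_k-x^*\|^2$‑contributions (from the Taylor residual, the regularization term, and the folded‑in corrections), divides by $\mu$, and uses $\mu_k<\tfrac12\mu$, $\delta_k\le\tfrac1{16}\mu$ and "$k$ sufficiently large'' to absorb the lower‑order cross terms.
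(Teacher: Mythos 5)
Your proposal is correct in substance but reaches the key estimate by a genuinely different route. The paper centres its analysis on the exact subproblem solution $\hat z_k$: it pairs the optimality conditions of the VI at $x^*$ and at $\hat z_k$ to get $\langle F(x^*)-\varphi_k(\hat z_k),x^*-\hat z_k\rangle\le 0$, Taylor-expands $F(x^*)-\varphi_k(\hat z_k)$, and invokes Assumption \ref{assumption-3} on the direction $\hat z_k-x^*$; it then bootstraps the linear bounds $\|\hat z_k-x_k\|\le 7\|x_k-x^*\|$ and $\|z_k-x_k\|\le 16\|x_k-x^*\|$ out of that same inequality before substituting back. You instead work at the perturbed point $y_k=z_k-e_k$, use the inclusion $\upsilon_k\in(F+N_C)(y_k)$ together with local strong monotonicity of $F+N_C$ to get $\mu\|y_k-x^*\|\le\|\upsilon_k\|$, and then bound $\|\upsilon_k\|$ by norm estimates on its five pieces; you also obtain the preliminary linear bound on $\|\hat z_k-x_k\|$ more directly, from Lemma \ref{lemma-localineq} tested at $z=x_k$ combined with Lemma \ref{lemma-project}, rather than by the paper's bootstrap. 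Your route has a concrete advantage: in your expansion the operator $B_k-\nabla F(x_k)$ genuinely acts on the direction $z_k-x_k$, so condition (iii) applies verbatim, whereas the paper must pass from $\|B_k-\nabla F(x_k)\|\,\|\hat z_k-x_k\|$ to a directional bound (the ``main obstacle'' you flag) and does so somewhat loosely. The price is that your constants come out differently from the displayed ones (the $2D$ term never enters your primary derivation, and your $\delta_k$-coefficient carries an extra $1/\mu$); but the paper's own bookkeeping already drops a $1/\mu$ between \eqref{super-11} and \eqref{4.4}, so the estimate you obtain is of exactly the intended form $O(\|x_k-x^*\|^2)+O(\delta_k\|x_k-x^*\|)$ and is, if anything, slightly sharper.
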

\begin{proof}
We proceed with the following three steps.

\textbf{Step 1.} First, we derive an upper bound for $\|x^{*} - \hat{z}_k\|$. Let $X'$ be a local neighborhood of $x^{*}$ satisfying Assumption \ref{assumption-3} and assumptions (i)-(iii). Let $x\in D$ and let $\hat{z}_k$ be the solution of \ref{subproblem}, i.e.,
$$\langle F(x_k)+(B_k + \mu_k I)(\hat{z}_k - x_k),x - \hat{z}\rangle \ge 0, \  \forall x\in C.$$
By Assumption \ref{assumption-2}, it holds that
\begin{align}\label{super-1}
 F(x_k + u) - F(x_k) &= \int_{0}^{1}\nabla F(x_k + tu)u\,dt \nonumber \\
&=  \nabla F(x_k)u + \int_{0}^{1} (\nabla F(x_k + tu)-\nabla F(x_k))u\, dt \nonumber \\
&= \nabla F(x_k)u + R_k(u),
\end{align}
where
$$\|R_k(u)\| \le \int_{0}^{1}\|\nabla F(x_k + tu)-\nabla F(x_k)\|\|u\|\,dt \le L\int_{0}^{1}t\|u\|^2\,dt  = \frac{L_2}{2}\|u\|^2 .
$$
Combining with \eqref{super-1}, it gives that
\begin{flalign}\label{4.6}
&F(x^{*}) - \varphi_k(\hat{z}_k) \nonumber \\
= &F(x^{*}) - F(x_k)-(B_k + \mu_k I)(\hat{z}_k - x_k) \nonumber \\
= &F(x^{*})-F(x_k)-\nabla F(x_k)(x^{*}-x_k) +\nabla F(x_k)(x^{*}-x_k)  -(B_k + \mu_k I)(\hat{z}_k - x_k) \nonumber \\
= &R_k(x^{*}-x_k) + \nabla F(x_k)(x^{*}-\hat{z}_k +\hat{z}_k - x_k) -(B_k + \mu_k I)(\hat{z}_k - x_k) \nonumber \\
= &R_k(x^{*}-x_k) + \nabla F(x_k)(x^{*}-\hat{z}_k) - (B_k - \nabla F(x_k)+\mu_k I)(\hat{z}_k - x_k).
\end{flalign}

From the properties of the variational inequality, it holds that
$$\langle F(x^{*}),x^{*} - \hat{z}_k \rangle \le 0,\  -\langle \varphi_k(\hat{z}_k),x^{*}-\hat{z}_k\rangle \le 0 ,$$
which implies
\begin{align}\label{super-2}
\langle F(x^{*}) - \varphi_k(\hat{z}_k),x^{*} - \hat{z}_k \rangle \le 0.
\end{align}
By Assumption \ref{assumption-3}, we know that for any $k\in N$ and $d\in \mathbb{R}^n$, we have$\langle \nabla F(x_k)d,d\rangle \ge \mu \|d\|^2.$
Combining \eqref{4.6} and \eqref{super-2}, we obtain that
\begin{flalign}\label{super-3}
&\mu\|\hat{z}_k - x^{*}\|^2   \nonumber \\
\le &\langle \nabla F(x_k)(\hat{z}_k - x^{*}),\hat{z}_k - x^{*}\rangle \nonumber \\
\le &\langle R_k(x^{*} - x_k),\hat{z}_k-x^{*}\rangle + \langle(B_k - \nabla F(x_k)+\mu_k I)(\hat{z}_k - x_k),x^{*}-\hat{z}_k\rangle \nonumber\\
\le &\frac{L_2}{2}\|x^{*} - x_k\|^2\|\hat{z}_k-x^{*}\| +\|B_k - \nabla F(x_k)+\mu_k I\|\|\hat{z}_k-x_k\|\|\hat{z}_k-x^{*}\|.
\end{flalign}
Dividing both sides by $\|\hat{z}_k - x^{*}\|$, it gives that
\begin{align}\label{super-4}
&\mu\|\hat{z}_k-x^{*}\|  \nonumber\\
\le &\frac{L_2}{2}\|x^{*} - x_k\|^2 + \|B_k - \nabla F(x_k)+\mu_k I\|\|\hat{z}_k-x_k\| \nonumber \\
\le &\frac{L_2}{2}\|x^{*} - x_k\|^2 + \|B_k - \nabla F(x_k)\|\|\hat{z}_k-x_k\| + \mu_k \|\hat{z}_k-x_k\| \nonumber \\
\le &\frac{L_2}{2}\|x^{*} - x_k\|^2 + \mu_k \|\hat{z}_k-x_k\|  +  \|B_k - \nabla F(x_k)\| \|\hat{z}_k-z_k\| \nonumber\\
&+\|B_k - \nabla F(x_k)\|\|z_k-x_k\|.
\end{align}

\textbf{Step 2.}
Next, we analyze upper bounds for $\|\hat{z}_k-x_k\|,\ \|z_k-x_k\|,\ \|\hat{z}_k-z_k\|$ to establish their relationships with $\|x_k-x^{*}\|$.
From Lemma \ref{lemma-localineq} and assumption (ii) , there exists a constant $C_2>0$ such that
\begin{align}\label{super-5}
\|z_k - \hat{z}_k\| \le \frac{1+\|B_k + \mu_kI\|}{c_k} \|z_k - P_{C}(z_k - \varphi_k(z_k))\| \le C_2\|e_k\|,
\end{align}
where $c_k$ is the smallest eigenvalue of $B_k + \mu_kI$.
From assumption (iii) and $\|\nabla F(x_k) - B_k\| \le D$, together with \eqref{super-5}, \eqref{super-4}, and \eqref{algorithm-error1}, there exist $\{\delta_k\}$ such that
\begin{align}\label{super-4-2}
\|\hat{z}_k - x^{*}\| \le &\  \mu^{-1}\left(\frac{L_2}{2}\|x^{*} - x_k\|^2 + \mu_k \|\hat{z}_k-x_k\| \right) \nonumber\\
&+ \mu^{-1}(C_2D\rho_k\mu_k + \delta_k)\|z_k - x_k\|.
\end{align}
With the triangle inequality  $\|\hat{z}_k - x_k\| \le \|\hat{z}_k - x^{*}\| + \|x_k - x^{*}\|$
and \eqref{super-4-2}, we obtain
\begin{align*}
  \|\hat{z}_k - x_k\| \le &\ \frac{L_2}{2\mu}\|x^{*} - x_k\|^2 + \frac{\mu_k}{\mu} \|\hat{z}_k-x_k\| \nonumber \\
   &+ \mu^{-1}(C_2D\rho_k\mu_k + \delta_k)\|z_k - x_k\| + \|x_k - x^{*}\|  .
\end{align*}

Therefore, we have
\begin{align}\label{super-6}
\left(1-\frac{\mu_k}{\mu}\right)\|\hat{z}_k-x_k\| \le &\frac{L_2}{2\mu}\|x_k-x^{*}\|^2+\|x_k-x^{*}\| \nonumber\\
&+\frac{1}{\mu}(C_2D\rho_k\mu_k+\delta_k)\|z_k-x_k\|.
\end{align}

Consider \eqref{super-6} and assumption (iii), for sufficiently large $k$, we have
\begin{align}\label{super-7}
\|\hat{z}_k-x_k\|\le &\frac{L_2}{\mu}\|x^{*}-x_k\|^2+2\|x_k-x^{*}\| +\frac{2}{\mu}\left(C_2D\rho_k\mu_k+\delta_k\right)\|z_k-x_k\|.
\end{align}

From the triangle inequality and \eqref{super-5}, together with \eqref{algorithm-error1} in the algorithm, it holds that
$\|z_k-x_k\| \le\|\hat{z}_k-z_k\|+\|\hat{z}_k-x_k\| \le C_2\rho_k\mu_k\|z_k-x_k\| + \|\hat{z}_k-x_k\|.$
Since $\rho_k\mu_k \to 0$, for $k $ sufficiently large, it holds that $\max{(C_2\rho_k\mu_k, C_2D\rho_k\mu_k)} \le \min{(\frac{1}{2} , \frac{\mu}{16})}$.
The above inequality directly gives that
\begin{align}\label{eq1}
\|z_k-x_k\|\le 2\|\hat{z}_k-x_k\|.
\end{align}
Combining \eqref{eq1} and \eqref{super-7} yields
\begin{align}\label{super-8}
\|z_k-x_k\|\le &\frac{2L_2}{\mu}\|x_k-x^{*}\|^2+4\|x_k-x^{*}\| +\frac{4}{\mu}\left(C_2D\rho_k\mu_k + \delta_k\right)\|z_k-x_k\|.
\end{align}
It leads to the following result
$$\left(1-\frac{4}{\mu}(C_2D\rho_k\mu_k+\delta_k) \right)\|z_k-x_k\|\le 4\left( \frac{L_2}{2\mu}\|x_k-x^{*}\|^2+\|x_k-x^{*}\| \right).$$
Since $\delta_k \le \frac{\mu}{16}$, we have $\delta_k + C_2D\rho_k\mu_k \le \frac{1}{8}\mu$. Thus, we obtain
\begin{align*}
\|z_k - x_k\| \le 8\left( \frac{L_2}{2\mu}\|x_k-x^{*}\|^2+\|x_k-x^{*}\| \right).
\end{align*}
As $\|x_k - x^{*}\|\to 0$, for $k$ sufficiently large, it holds that
\begin{align}\label{super-9}
\|z_k-x_k\|\le 16\|x_k-x^{*}\|.
\end{align}
Substituting \eqref{super-9} into \eqref{super-7}, we obtain that
\begin{align*}
\|\hat{z}_k-x_k\| \le 2\left( \frac{L_2}{2\mu}\|x^{*}-x_k\|^2+\|x_k-x^{*}\|
+2\|x_k-x^{*} \| \right).
\end{align*}
Then, fot $k$ sufficiently large
\begin{align}\label{super-10}
\|\hat{z}_k - x_k\| \le  7\|x_k - x^{*}\|.
\end{align}
Therefore, we obtained upper bounds for $\|\hat{z}_k-x_k\|, \  \|z_k-x_k\|$, and $\|\hat{z}_k-z_k\|$. Consider \eqref{super-4-2}, \eqref{super-9}, and \eqref{super-10}, it holds that
\begin{align}\label{super-11}
\|\hat{z}_k-x^{*}\|\le & \frac{L_2}{2\mu}\|x_k-x^{*}\|^2
+\frac{7\mu_k}{\mu}\|x_k-x^{*}\| +\frac{1}{\mu}16(C_2D\rho_k\mu_k+\delta_k)\|x_k-x^{*}\|  .
\end{align}

\textbf{Step 3.} We will derive an upper bound for $\|z_k - z^*\|$.
By assumption (i), we have $\mu_k \le C_1 \|x_k - x^*\|,\ \rho_k \to 0$. For $k$ sufficiently large, it holds that $\rho_k < \frac{1}{16C_2}$. Combining \eqref{super-5} and \eqref{super-11} and using the triangle inequality, it holds that
\begin{align}\label{super-12}
\|z_k-x^{*}\| &\le \|z_k - \hat{z}_k\| + \|\hat{z}_k - x^{*}\| \nonumber \\
&\le  \frac{L_2}{2\mu}\|x_k-x^{*}\|^2
+\frac{5\mu_k}{\mu}\|x_k-x^{*}\| +\frac{16}{\mu}(C_2D\rho_k\mu_k+\delta_k)\|x_k-x^{*}\|  \nonumber \\
&\quad+ 16C_2\rho_k\mu_k\|x_k - x^{*}\| \nonumber\\
&\le \frac{1}{\mu}\left( 16C_2\rho_k\mu_k + 5\mu_k + 16(C_2D\rho_k\mu_k+\delta_k) \right)\|x_k - x^{*}\| \nonumber\\
& \quad +\frac{L_2}{2\mu}\|x_k - x^{*}\|^2.
\end{align}

Similarly, for $k$ sufficiently large
\begin{align*}
\|z_k - x^{*}\| \le \frac{12C_1+ L_2 + 2D}{2\mu}\|x_k - x^{*}\|^2 + 16\delta_k\|x_k - x^{*}\|,
\end{align*}
which is \eqref{4.4}. This completes the proof.
\end{proof}

\begin{lemma}\label{lemma-localunit0}\rm{ \cite{Taji2008} }
Suppose that Assumption \ref{assumption-1} is satisfied. Then there exists a constant $\bar{L} > 0$ such that for any $x\in X$, we have
\begin{align}
f_{\alpha}(x) \le \langle F(x^{*}),x-x^{*} \rangle + \bar{L}\|x-x^{*}\|^2.
\end{align}
\end{lemma}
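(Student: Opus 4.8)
The plan is to prove Lemma \ref{lemma-localunit0} by exploiting the dual/variational characterization of the merit function $f_\alpha$ together with the first-order optimality condition \eqref{VIP} at $x^*$ and the local Lipschitz continuity of $F$ from Assumption \ref{assumption-1}. Recall from \eqref{merit} that
\begin{align*}
f_{\alpha}(x) = \max_{y\in C}\Bigl\{ -\langle F(x), y-x\rangle - \tfrac{\alpha}{2}\|y-x\|^2 \Bigr\}
= -\langle F(x), H_\alpha(x)-x\rangle - \tfrac{\alpha}{2}\|H_\alpha(x)-x\|^2,
\end{align*}
where $H_\alpha(x)=P_C(x-\alpha^{-1}F(x))$. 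Since the max is attained at $y=H_\alpha(x)$, for \emph{any} admissible competitor $y\in C$ we get the upper bound $f_\alpha(x)\le -\langle F(x), y-x\rangle - \tfrac{\alpha}{2}\|y-x\|^2\le -\langle F(x),y-x\rangle$. The natural choice is $y=x^*\in C$, which yields immediately
\begin{align*}
f_\alpha(x) \le -\langle F(x), x^*-x\rangle = \langle F(x), x-x^*\rangle.
\end{align*}

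The remaining task is to replace $F(x)$ by $F(x^*)$ at the cost of a quadratic term. Write $\langle F(x),x-x^*\rangle = \langle F(x^*), x-x^*\rangle + \langle F(x)-F(x^*), x-x^*\rangle$, and bound the last inner product by Cauchy–Schwarz and Assumption \ref{assumption-1}:
\begin{align*}
\langle F(x)-F(x^*), x-x^*\rangle \le \|F(x)-F(x^*)\|\,\|x-x^*\| \le L_1\|x-x^*\|^2,
\end{align*}
valid for all $x$ in the neighborhood $X$ of $x^*$. Combining the two displays gives $f_\alpha(x)\le \langle F(x^*),x-x^*\rangle + L_1\|x-x^*\|^2$, which is the claimed inequality with $\bar L = L_1$.

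There is no real obstacle here: the only subtlety is the direction of the inequality — one must use that $x^*\in C$ is a \emph{feasible} (not optimal) point in the maximization defining $f_\alpha$, so it produces an upper rather than lower bound, and that $-\langle F(x),x^*-x\rangle$ dominates after dropping the nonnegative term $\tfrac{\alpha}{2}\|x^*-x\|^2$. One should also note that if one prefers to keep that quadratic term, a sharper constant $\bar L = L_1 - \tfrac{\alpha}{2}$ (when positive) is available, but $\bar L=L_1$ suffices for the statement. The appeal to monotonicity is not even needed for this bound; it is purely the variational formula for $f_\alpha$ plus Lipschitz continuity.
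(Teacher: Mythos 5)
There is a genuine gap, and it is fatal to the argument: you have the direction of the inequality backwards at the very first step. Since $f_\alpha(x)=\max_{y\in C}\{-\langle F(x),y-x\rangle-\tfrac{\alpha}{2}\|y-x\|^2\}$, plugging in a \emph{feasible competitor} $y=x^*\in C$ gives a \emph{lower} bound on the maximum, namely $f_\alpha(x)\ \ge\ \langle F(x),x-x^*\rangle-\tfrac{\alpha}{2}\|x-x^*\|^2$, not the upper bound $f_\alpha(x)\le\langle F(x),x-x^*\rangle$ that your whole chain rests on. The error is not merely cosmetic: your final conclusion with $\bar L=L_1$ is actually false. Take $C=\mathbb{R}^n$ and $F(x)=Lx$ (so $x^*=0$, $F(x^*)=0$, Lipschitz constant $L_1=L$). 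Then $H_\alpha(x)=x-F(x)/\alpha$ and a direct computation gives $f_\alpha(x)=\tfrac{1}{2\alpha}\|F(x)\|^2=\tfrac{L^2}{2\alpha}\|x\|^2$, which exceeds $\langle F(x^*),x-x^*\rangle+L_1\|x-x^*\|^2=L\|x\|^2$ whenever $L>2\alpha$ (note the paper uses $\alpha=0.01$ in its experiments, so this is the typical regime). The lemma itself survives, but only with a larger constant $\bar L$ depending on $\alpha$.

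The paper does not supply a proof (it cites Taji), but a correct route is the following. The only legitimate "drop a term" step is at the actual maximizer: $f_\alpha(x)=\langle F(x),x-H_\alpha(x)\rangle-\tfrac{\alpha}{2}\|x-H_\alpha(x)\|^2\le\langle F(x),x-H_\alpha(x)\rangle$. Now decompose
\begin{align*}
\langle F(x),x-H_\alpha(x)\rangle
=\langle F(x^*),x-x^*\rangle+\langle F(x^*),x^*-H_\alpha(x)\rangle+\langle F(x)-F(x^*),x-H_\alpha(x)\rangle .
\end{align*}
The middle term is $\le 0$ because $H_\alpha(x)\in C$ and $x^*$ solves \eqref{VIP}, so $\langle F(x^*),H_\alpha(x)-x^*\rangle\ge 0$. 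The last term is bounded by $L_1\|x-x^*\|\,\|x-H_\alpha(x)\|$ via Assumption \ref{assumption-1}, and one then invokes the residual bound $\|x-H_\alpha(x)\|\le (2+L_1/\alpha)\|x-x^*\|$ (the argument of Lemma \ref{lemma-project} applied to the step $\alpha^{-1}F$). This yields the claim with $\bar L=L_1(2+L_1/\alpha)$. The two ingredients you omitted — testing the VIP at $x^*$ against the point $H_\alpha(x)$, and an error-bound estimate for $\|x-H_\alpha(x)\|$ — are exactly what is needed to make the upper bound go in the right direction.
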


\begin{theorem}\label{lemma-localunit1}
Suppose all the assumptions in Theorem \ref{lemma-localconverge} are satisfied in a local neighborhood of the solution $x^{*}$ of \eqref{VIP} . Let $\{z_k\}$ be the approximate solution to the subproblem \ref{subproblem} in Algorithm \ref{algorithm}, and let $\{\hat{z}_k\}$ be the exact solution. Then, in a local neighborhood $X$ of $x^{*}$, there exist constants $L_1',\ L_2',\ L_3',\ L_4',\ L_5' > 0$ such that
\begin{align}\label{localunit1-1}
f_{\alpha}(z_k) \le & L_1'\|x_k - x^{*}\|^4 + (L_2'\delta_k + L_3')\|x_k - x^{*}\|^3 + (L_4'\delta_k + L_5'\delta_k^2)\|x_k - x^{*}\|^2.
\end{align}
\end{theorem}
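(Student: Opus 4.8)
The plan is to feed the local contraction estimates \eqref{4.4} of Theorem~\ref{lemma-localconverge} into the bound for $f_{\alpha}$ supplied by Lemma~\ref{lemma-localunit0}. Since $\|z_k-x^{*}\|\le\|z_k-x_k\|+\|x_k-x^{*}\|=O(\|x_k-x^{*}\|)$ by \eqref{super-9}, the point $z_k$ lies in the neighbourhood where Lemma~\ref{lemma-localunit0} applies once $\|x_k-x^{*}\|$ is small, giving
\begin{equation*}
f_{\alpha}(z_k)\le\langle F(x^{*}),z_k-x^{*}\rangle+\bar L\,\|z_k-x^{*}\|^{2}.
\end{equation*}
The term $\bar L\|z_k-x^{*}\|^{2}$ is handled at once by squaring the bound \eqref{4.4} on $\|z_k-x^{*}\|$, which produces exactly the monomials $\|x_k-x^{*}\|^{4}$, $\delta_k\|x_k-x^{*}\|^{3}$, $\delta_k^{2}\|x_k-x^{*}\|^{2}$. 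So the real work is to show that the linear term $\langle F(x^{*}),z_k-x^{*}\rangle$, which is nonnegative because $z_k\in C$ and $x^{*}$ solves \eqref{VIP}, is in fact $O(\|x_k-x^{*}\|^{2})$ with a coefficient tending to $0$ with $\delta_k$; the crude estimate $\|F(x^{*})\|\,\|z_k-x^{*}\|$, only $O(\|x_k-x^{*}\|)$, is useless.

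To obtain the sharp bound I would exploit the (inexact) optimality of $z_k$ for the subproblem. Writing $\varphi_k(z)=F(x_k)+(B_k+\mu_kI)(z-x_k)$ for the affine subproblem map and splitting $\langle F(x^{*}),z_k-x^{*}\rangle=\langle F(x^{*})-\varphi_k(z_k),z_k-x^{*}\rangle+\langle\varphi_k(z_k),z_k-x^{*}\rangle$, I first treat the second summand. The projection characterisation of $P_{C}(z_k-\varphi_k(z_k))=z_k-e_k$, evaluated at $x=x^{*}\in C$, gives $\langle\varphi_k(z_k)-e_k,\,z_k-e_k-x^{*}\rangle\le0$, hence $\langle\varphi_k(z_k),z_k-x^{*}\rangle\le\langle\varphi_k(z_k),e_k\rangle+\langle e_k,z_k-x^{*}\rangle$. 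Both inequalities in \eqref{algorithm-error1} are used here: the first gives $\|e_k\|\le\rho_k\mu_k\|z_k-x_k\|$, and the second, combined with $|\langle e_k,z_k-x_k\rangle|\le\|e_k\|\,\|z_k-x_k\|$, gives $\langle\varphi_k(z_k),e_k\rangle\le 2\rho_k\mu_k\|z_k-x_k\|^{2}$. Using $\mu_k\le C_1\|x_k-x^{*}\|$ (condition (i)), $\|z_k-x_k\|\le16\|x_k-x^{*}\|$ from \eqref{super-9}, and $\|z_k-x^{*}\|=O(\|x_k-x^{*}\|)$ from \eqref{4.4}, the second summand is $O(\rho_k\|x_k-x^{*}\|^{3})$, hence of order $\|x_k-x^{*}\|^{3}$ since $\rho_k\to0$ is bounded.

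For the first summand I would reuse the linearisation identity underlying \eqref{4.6}, with $z_k$ in place of $\hat z_k$: by \eqref{super-1},
\begin{equation*}
F(x^{*})-\varphi_k(z_k)=R_k(x^{*}-x_k)+\nabla F(x_k)(x^{*}-z_k)-\bigl(B_k-\nabla F(x_k)+\mu_kI\bigr)(z_k-x_k).
\end{equation*}
Taking the inner product with $z_k-x^{*}$, the middle term equals $-\langle\nabla F(x_k)(z_k-x^{*}),z_k-x^{*}\rangle\le-\mu\|z_k-x^{*}\|^{2}\le0$ by Assumption~\ref{assumption-3} and can be dropped; the other two are bounded using $\|R_k(x^{*}-x_k)\|\le\tfrac{L_2}{2}\|x_k-x^{*}\|^{2}$, condition (iii) in the form $\|(B_k-\nabla F(x_k))(z_k-x_k)\|\le\delta_k\|z_k-x_k\|$, $\mu_k\le C_1\|x_k-x^{*}\|$, and the bounds \eqref{super-9} and \eqref{4.4} for $\|z_k-x_k\|$ and $\|z_k-x^{*}\|$. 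This again yields a sum of terms of orders $\|x_k-x^{*}\|^{4}$, $\delta_k\|x_k-x^{*}\|^{3}$, $\|x_k-x^{*}\|^{3}$, $\delta_k\|x_k-x^{*}\|^{2}$, $\delta_k^{2}\|x_k-x^{*}\|^{2}$.

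It remains to add the three blocks, collect the monomials in $\|x_k-x^{*}\|$ and $\delta_k$, absorb (in the bounded neighbourhood $X$) any higher powers of $\|x_k-x^{*}\|$ into lower ones, and relabel the accumulated constants --- assembled from $L_2,\mu,\bar L,C_1,C_2,D,\|F(x^{*})\|$ and $\sup_k\rho_k$ --- as $L_1',\dots,L_5'$; this is \eqref{localunit1-1}. The step needing the most care, and the main obstacle, is the estimate of $\langle\varphi_k(z_k),z_k-x^{*}\rangle$: with only the first inequality in \eqref{algorithm-error1} the inexactness contributes $\langle\varphi_k(z_k),e_k\rangle=O(\|e_k\|)=O(\rho_k\|x_k-x^{*}\|^{2})$, a term with no $\delta_k$ factor that would break the form of \eqref{localunit1-1}; it is precisely the second, orthogonality-type inequality in \eqref{algorithm-error1} that pushes this contribution down to cubic order. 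Everything else is routine bookkeeping with the local bounds already established in the proof of Theorem~\ref{lemma-localconverge}.
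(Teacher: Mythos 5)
Your argument is correct, and it reaches \eqref{localunit1-1} by a genuinely different route from the paper. Both proofs start from Lemma~\ref{lemma-localunit0} and both dispose of $\bar L\|z_k-x^*\|^2$ by squaring \eqref{4.4}; the difference is in how the linear term $\langle F(x^*),z_k-x^*\rangle$ is forced down to the required order. The paper splits off $F(x_k)$, passes to the \emph{exact} solution $\hat z_k$, invokes the exact linear variational inequality at $\hat z_k$ to get $\langle F(x_k),\hat z_k-x^*\rangle\le\langle (B_k+\mu_k I)(\hat z_k-x_k),x^*-\hat z_k\rangle$, and pays for the substitution with the term $\|F(x_k)\|\,\|z_k-\hat z_k\|$, which it controls via Lemma~\ref{lemma-localineq} and the cubic bound \eqref{3.48}. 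You instead split off $\varphi_k(z_k)$ and work with the inexact iterate directly: the projection characterization of $e_k$ plus \emph{both} inequalities in \eqref{algorithm-error1} give $\langle\varphi_k(z_k),z_k-x^*\rangle=O(\rho_k\|x_k-x^*\|^3)$, and the linearization identity together with Assumption~\ref{assumption-3} (which lets you drop $-\langle\nabla F(x_k)(z_k-x^*),z_k-x^*\rangle\le 0$) handles $\langle F(x^*)-\varphi_k(z_k),z_k-x^*\rangle$. Your version buys two things: it never needs $\hat z_k$, Lemma~\ref{lemma-localineq}, or \eqref{3.48} --- whose stated cubic order in the paper implicitly requires $\rho_k=O(\|x_k-x^*\|)$ rather than just $\rho_k\to 0$, a point the paper glosses over --- and the inexactness enters only as $\rho_k\|x_k-x^*\|^3$, which is absorbed into $L_3'$ with $\rho_k$ merely bounded. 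Your closing observation that the second, orthogonality-type condition in \eqref{algorithm-error1} is exactly what keeps $\langle\varphi_k(z_k),e_k\rangle$ at cubic order is accurate and is the key point a naive estimate would miss. The remaining bookkeeping (inserting $\mu_k\le C_1\|x_k-x^*\|$, \eqref{super-9}, and \eqref{4.4}, then collecting the monomials $\|x_k-x^*\|^4$, $\delta_k\|x_k-x^*\|^3$, $\|x_k-x^*\|^3$, $\delta_k\|x_k-x^*\|^2$, $\delta_k^2\|x_k-x^*\|^2$) is routine, as you say.
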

\begin{proof}
According to Theorem \ref{lemma-global}, $\{x_k\}$ is bounded. Suppose there exist constants $D_0, \  D_1>0$ such that for any $x_k \in X, \ \|B_k\| \le D_0, \  \|F(x_k)\|\le D_1$. Let $\bar{L}= \frac{1}{2}L$. Since $z_k$ is an approximate solution of \ref{subproblem}, it holds that
\begin{align}
\langle F(x_k),z_k - x^{*}\rangle &= \langle F(x_k),z_k - \hat{z}_k + \hat{z}_k - x^{*}\rangle \nonumber \\
&\le \|F(x_k)\| \cdot \|z_k - \hat{z}_k\| + \langle F(x_k),\hat{z}_k - x^{*}\rangle.
\end{align}
According to Theorem \ref{lemma-localconverge}, there exist constants $\gamma_1,\gamma_2 >0$ such that
\begin{align}\label{3.47}
\|\hat{z}_k - x^{*}\| \le \gamma_1 \|x_k - x^{*}\|^2 + \gamma_2\delta_k\|x_k - x^{*}\|.
\end{align}
On the other hand, based on \eqref{algorithm-error1} and Lemma \ref{lemma-localineq}, there exists a constant $\mu$ such that the following inequality holds $\|z_k - \hat{z}_k\| \le \mu  \rho_k \mu_k \|z_k - x_k\|.$
According to the definitions of $\rho_k$ and $\mu_k$ in Theorem \ref{lemma-localconverge} and Lemma \ref{lemma-project}, there exists a constant $C_2 > 0$ such that
\begin{align}\label{3.48}
\|z_k - \hat{z}_k\| \le C_2 \|x_k - x^{*}\|^3.
\end{align}
According to the proof of Theorem \ref{lemma-localconverge}, for $k$ sufficiently large, the following holds
\begin{align}\label{eq-1}
\|\hat{z}_k - x_k\| \le 7\|x_k - x^{*}\|.
\end{align}
Since $\hat{z}_k$ is the exact solution to $VIP(\varphi_k,C)$, combining \eqref{3.47},\  \eqref{3.48}, and Lemma \ref{lemma-localunit0}, we obtain that
\begin{align*}
f_{\alpha}(z_k) &\le \langle F(x^{*}),z_k - x^{*} \rangle + \bar{L}\|z_k - x^{*}\|^2 \\
&= \langle F(x^{*}) - F(x_k), z_k - x^{*}\rangle + \langle F(x_k),z_k - x^{*}\rangle + \bar{L}\|z_k -x^{*}\|^2 \\
&\le \langle F(x^{*}) - F(x_k), z_k - x^{*}\rangle + \|F(x_k)\|\|z_k - \hat{z}_k\| + \bar{L}\|z_k-x^{*}\|^2  \\
&\quad+ \langle F(x_k), \hat{z}_k - x^{*}\rangle \\
&\le \|F(x^{*})-F(x_k)\|\|z_k - x^{*}\| + \langle (B_k + \mu_kI)(\hat{z}_k - x_k),x^{*} - \hat{z}_k\rangle \\
&\quad   + \|F(x_k)\|\|z_k - \hat{z}_k\| + \bar{L}\|z_k - x^{*}\|^2 \\
&\le L_1\gamma_1\|x_k - x^{*}\|^3 + L_1\gamma_2\delta_k\|x_k-x^{*}\|^2 +\bar{L}\|z_k - x^{*}\|^2  \\
&\quad  + D_1C_2\|x_k - x^{*}\|^3 + 7(D_0 + \mu_k)\|x_k - x^{*}\|\|\hat{z}_k - x^{*}\| .
\end{align*}

According to Theorem \ref{lemma-localconverge}, there exist $\gamma_3 > 0,\ \gamma_4 > 0$ such that
\begin{align}\label{3.49}
\|z_k - x^{*}\| \le \gamma_3\|x^{*}-x_k\|^2 + \gamma_4\delta_k\|x_k-x^{*}\|.
\end{align}
Thus, we have
\begin{align*}
\bar{L}\|z_k - x^{*}\|^2 \le \bar{L} \left( \gamma_3^2\|x_k - x^{*}\|^4 + 2\gamma_3\gamma_4\delta_k\|x_k - x^{*}\|^3 + \gamma_4^2\delta_k^2\|x_k-x^{*}\|^2 \right).
\end{align*}
Together with \eqref{3.47} and noting that $\mu_k \to 0$, we can easily obtain the following result
\begin{align*}
7(D_0+\mu_k)\|x_k-x^{*}\|\|\hat{z}_k-x^{*}\|\le 8D_0(\gamma_1\|x_k-x^{*}\|^3+\gamma_2\delta_k\|x_k-x^{*}\|^2).
\end{align*}
After a simple calculation, we obtain that there exist $L_ {1}',L_{2}',K_{3},L_{4}',L_{5}'\ge0 $ such that \eqref{localunit1-1} holds
where, $L_{1}'=\bar{L}\gamma_3^2,\ L_{2}'=2\bar{L}\gamma_3\gamma_4,\ L_{3}'=8D_0\gamma_1 + L_1\gamma_1 + D_1C_2,\  L_{4}'=8D_0\gamma_2 + L_1\gamma_2 $,\  $L_{5}'=\bar{L}\gamma_4^2$. The proof is finished.
\end{proof}

The following result states that by utilizing the error bound of the merit function $f_{\alpha}(z_k)$, the algorithm will automatically choose the unit step size within some local neighborhood of the solution of VIP\eqref{VIP}.

\begin{theorem}\label{lemma-localunit2}
Assume that all the conditions in Theorem \ref{lemma-localconverge} hold within some local neighborhood $X$ of the solution $x^*$ of VIP\eqref{VIP}. Let $0< \gamma <1$. If the conditions in Proposition \ref{p5} hold, then there exists a constant $0 < \delta < \frac{\mu}{16}$ and a neighborhood $X'$ of $x^{*}$, such that for any $x_k \in X'$ and $\delta_k \le \delta$, we have $f_{\alpha}(z_k)\le \gamma f_{\alpha}(x_k)$.
\end{theorem}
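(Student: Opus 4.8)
The plan is to play the quartic upper bound for $f_{\alpha}(z_k)$ from Theorem~\ref{lemma-localunit1} against the quadratic lower bound for $f_{\alpha}(x_k)$ from Proposition~\ref{p5}, and to check that, once $x_k$ is close enough to $x^{*}$ and $\delta_k$ is small enough, the former is at most $\gamma$ times the latter. Concretely I would split into three short steps: establish the lower bound, reduce the target inequality to a scalar polynomial inequality in $t:=\|x_k-x^{*}\|$ and $\delta_k$, and then choose $\delta$ and the neighborhood $X'$ so that this scalar inequality holds.

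\textbf{Step 1 (lower bound).} Under the hypotheses of Proposition~\ref{p5} --- $F$ strongly monotone on $C$ with modulus $\mu$, and $\alpha<2\mu$ so that $c_{0}:=\mu-\alpha/2>0$ --- we get $f_{\alpha}(x_k)\ge c_{0}\|x_k-x^{*}\|^{2}$. The case $x_k=x^{*}$ is trivial: then \eqref{4.4} forces $z_k=x^{*}$, so $f_{\alpha}(z_k)=0=\gamma f_{\alpha}(x_k)$; hence assume $x_k\neq x^{*}$ and write $t:=\|x_k-x^{*}\|>0$. \textbf{Step 2 (reduction).} By Theorem~\ref{lemma-localunit1}, on the neighborhood $X$ there are constants $L_1',\dots,L_5'>0$ with
\[
f_{\alpha}(z_k)\le L_1't^{4}+(L_2'\delta_k+L_3')t^{3}+(L_4'\delta_k+L_5'\delta_k^{2})t^{2}.
\]
So it suffices to find $\delta$ and $X'$ such that, for $x_k\in X'$ and $\delta_k\le\delta$,
\[
L_1't^{2}+(L_2'\delta_k+L_3')t+(L_4'\delta_k+L_5'\delta_k^{2})\le\gamma c_{0};
\]
multiplying by $t^{2}$ and using Step~1 then yields $f_{\alpha}(z_k)\le\gamma c_{0}t^{2}\le\gamma f_{\alpha}(x_k)$.

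\textbf{Step 3 (choice of $\delta$ and $X'$).} All $L_i'\ge 0$ and $\delta_k\le\delta$, so the displayed left-hand side is at most $L_1't^{2}+(L_2'\delta+L_3')t+(L_4'\delta+L_5'\delta^{2})$. First pick $\delta\in(0,\mu/16)$ small enough that $L_4'\delta+L_5'\delta^{2}\le\gamma c_{0}/3$, which is possible since this expression is continuous in $\delta$ and vanishes at $0$, and which also secures the requirement $\delta<\mu/16$. With $\delta$ fixed, pick $\varepsilon>0$ with $L_1'\varepsilon^{2}\le\gamma c_{0}/3$ and $(L_2'\delta+L_3')\varepsilon\le\gamma c_{0}/3$, and set $X':=\{x:\|x-x^{*}\|\le\varepsilon\}\cap X$; shrinking $\varepsilon$ further if needed, $X'$ lies inside every neighborhood on which Theorems~\ref{lemma-localconverge} and \ref{lemma-localunit1} were proved, so \eqref{localunit1-1} is legitimately applicable. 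Then for $x_k\in X'$ and $\delta_k\le\delta$ each of the three summands is $\le\gamma c_{0}/3$, which gives the scalar inequality and hence the theorem.

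\textbf{Main obstacle.} There is essentially no analytic obstacle: the statement is a bookkeeping consequence of the error bound \eqref{localunit1-1} combined with the quadratic growth estimate of Proposition~\ref{p5}. The delicate points are only (a) confirming $c_{0}=\mu-\alpha/2>0$, i.e.\ reading ``the conditions in Proposition~\ref{p5}'' as including $\alpha<2\mu$ --- without this the lower bound is vacuous and the argument collapses; (b) making sure $X'$ is contained in all the previously used neighborhoods so that Theorems~\ref{lemma-localconverge}--\ref{lemma-localunit1} genuinely apply on it; and (c) the degenerate case $x_k=x^{*}$, dealt with in Step~1.
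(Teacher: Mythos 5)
Your proof is correct and takes essentially the same route as the paper's: both play the upper bound \eqref{localunit1-1} against the quadratic lower bound $f_{\alpha}(x_k)\ge(\mu-\alpha/2)\|x_k-x^{*}\|^{2}$ from Proposition~\ref{p5}, then choose $\delta$ and the neighborhood so that the resulting ratio is at most $\gamma$. The only differences are cosmetic --- you split the budget into three thirds by a continuity argument where the paper uses two halves and solves the quadratic in $\delta_k$ explicitly, and you additionally handle the degenerate case $x_k=x^{*}$ and correctly reduce the exponents of $\|x_k-x^{*}\|$ after dividing by $\|x_k-x^{*}\|^{2}$ (the paper's displayed intermediate inequality leaves them at $4$ and $3$, an apparent typo).
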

\begin{proof}
By Theorem \ref{lemma-localconverge}, there exist constants $\gamma_1,\ \gamma_2,\ \gamma_3,\ \gamma_4>0$ such that within some local neighborhood $N$ of $x^{*}$, it holds that
\begin{align*}
\begin{cases}
\|\hat{z}_k - x^{*}\| \le \gamma_1 \|x^{*} - x_k\|^2+\gamma_2 \delta_k \|x_k - x^{*}\|, \\
\|z_k - x^{*}\|\le \gamma_3 \|x_k - x^{*}\|^2 + \gamma_4 \delta_k \|x_k - x^{*}\|.
\end{cases}
\end{align*}

By Theorem \ref{lemma-localunit1}, we know that there exist $L_ {1}',L_{2}',K_{3},L_{4}',L_{5}'\ge0 $ such that within some local neighborhood $N'$ of $x^{*}$, \eqref{localunit1-1} holds.
According to Proposition \ref{p5}, for any $x_k \in X$, it holds that
$$f(x_k) \ge (\mu - \frac{\alpha}{2})\|x_k - x^{*}\|^2.$$
Combining with \eqref{localunit1-1}, for any $x\in (N \cap N')$, it holds that
$$f_{\alpha}(z_k) \le \left( L_{1}'\|x_k - x^{*}\|^4 + (L_{2}'\delta_k + L_{3}')\|x_k - x^{*}\|^3 + (L_{4}'\delta_k + L_{5}'\delta_{k}^2) \right) \frac{2f(x_k)}{2\mu - \alpha}.$$
Since $\|x_k - x^{*}\|\to 0$, for $k$ sufficiently large, it holds that
$$\frac{L_{1}'\|x_k - x^{*}\|^4 + (L_{2}'\delta_k + L_{3}')\|x_k - x^{*}\|^3}{\mu - \alpha/2} \le \frac{\gamma}{2}.$$
If $0 < \delta_k \le \sqrt{\frac{\gamma(\mu - \alpha /2)}{2L_5'}+\frac{L_4'^2}{4L_5'^2}} - \frac{L_4'}{2L_5'}$, then $\frac{L_4'\delta_k + L_5'\delta_k^2}{\mu - \alpha/2} \le \frac{\gamma}{2}$, and thus
$$f_ {\alpha}(z_k) \le \gamma f(x_k).$$
Therefore, there exists a local neighborhood $X'=(N \cap N')$ of the solution to problem \eqref{VIP}, such that for any $x_k \in X'$, we have
$$\frac{L_{1}'\|x_k - x^{*}\|^4 + (L_{2}'\delta_k + L_{3}')\|x_k - x^{*}\|^3 + (L_{4}'\delta_k + L_{5}'\delta_{k}^2)}{\mu-\alpha /2} \le \gamma.$$
Let $\delta = \min{\left( \sqrt{\frac{\gamma(\mu - \alpha /2)}{2L_5'}+\frac{L_4'^2}{4L_5'^2}} - \frac{L_4'}{2L_5'} , \frac{\mu}{16} \right)}$. Therefore, when $\delta_k \le \delta$, for any $x_k \in X'$, we have $f_{\alpha}(z_k) \le \gamma f_{\alpha}(x_k)$. The proof is finished.
\end{proof}

\section{Numerical Experiment}
\numberwithin{equation}{section}
In this section, we report the numerical results of the inexact regularized quasi-Newton method (IRQN) proposed in this paper and compare it with the inexact Newton method (INM) proposed by \cite{solodov2000truly}. The code of the algorithm is implemented in MATLAB R2020a and computed on a personal computer with an AMD Ryzen 7 5800H with Radeon Graphics 3.20 GHz CPU.

The parameter settings of the algorithm are as follows. For IRQN, we set $\rho_k = 0$, $\lambda = 0.5$, $\eta = 0.3$, $\beta = 0.7$, $\gamma = 0.5$, $h = 10^{-5}$, $\alpha = 0.01$ and $B_0 = I$, where $I\in R^{n\times n}$ is the identity matrix. In Algorithm \ref{algorithm}, we use the BFGS method in \cite{2000LiDH} to update $B_k$:
\begin{align*}
B_{k+1} =
\begin{cases}
B_{k}-\frac{B_{k}s_{k}s_{k}^{T}B_{k}}{s_{k}^{T}B_{k}s_{k}}+\frac{y_{k}y_{k}^{T}}{y_{k}^{T}s_{k}},  & \mbox{if }\frac{y_{k}y_{k}^{T}}{s_{k}^{T}s_{k}}\ge h\mu_k^{r}\mbox{;  } \\
B_k, & \mbox{otherwise.  }
\end{cases}
\end{align*}
Here, $s_k = x_{k+1} - x_k$, $y_k = F(x_{k+1}) - F(x_k)$, and $r,h > 0$ are constants.

In INM, we set $\rho_k = 0$, $\lambda = 0.5$, $\eta = 0.3$, and $\beta = 0.7$. In both algorithms, the same projection method \cite{HanBBVI} is used to solve the subproblems. When the constraint set $C$ is $\mathbb{R}_{+}^n$ or $[a,b]$, $P_{C}(x)$ has an explicit expression. If $C$ is a linear constraint, we will use the MATLAB quadratic programming solver 'quadprog' to calculate the projection $P_{C}(x)$. The regularization term of the subproblem is set to $\mu_k=0.01\times \|x_k - P_{C}(x_k - F(x_k)/0.01)\|$ for both algorithms. The stopping criterion for both algorithms is $\alpha\|x_k - P_{C}(x_k - F(x_k)/\alpha)\| < 10^{-5}$.

We will report the following results: dimension $n$, number of iterations $iter$, computation time $t$ (in seconds), and error $res$. Here, $res = \alpha\|x_k - P_{C}(x_k - F(x_k)/\alpha)\|$.

\subsection{Nolinear equations}
We test the following examples. We use $A = tridiag(a, b, c)$ to denote a tridiagonal matrix, where $A_{ii}= b, A_{ii+1}= c, A_{i, i-1} = a$.

\textbf{Ex 1} \cite{Zhou2008} $F$ is defined as: $F_{i}(x)=x_i - \sin{(x_i)},\  i=1,2,...,n,$ $x_0 = (1,1,\cdots,1)^{\top}$.

\textbf{Ex 2} \cite{Zhou2008} Here $F(x) = Ax + g(x)$, where $g(x) = (e^{x_1}-1,e^{x_2}-1,\dots,e^{x_n}-1)^{\top}$,  $A = tridiag(-1,2,-1) \in \mathbb{R}^{n\times n}$ and $x_0 = (1,1,\cdots,1)^{\top}$.

\begin{table*}[htbp]
\scriptsize
\centering
\caption{\text{Results on nonlinear equation problems}}\label{Tab01}
\begin{tabular*}{\textwidth}{@{\extracolsep\fill}cccccccc}
\toprule
\multirow{2}{*}{} &\  & \multicolumn{3}{c}{INM} & \multicolumn{3}{c}{IRQN} \\
\cmidrule(r){3-5} \cmidrule(r){6-8}
&{n} &  iter     &  time   &   res

&  iter      &  time   &   res \\
\midrule
\multirow{3}{*}{\textbf{Ex 1} } &$100$ & 17 & 1.3e-3 & 1.7e-6 & 32 & 2.4e-3 & 9.4e-6   \\

\multirow{3}{*}{} &$1000$ & 40 & 1.1e-1 & 1.2e-7 & 55 & 6.8e-2 & 3.4e-6   \\

\multirow{3}{*}{}  &$5000$ & 80 & 7.4e+0 & 2.8e-8 & 94 & 6.6e+0 & 5.5e-5
\\
\midrule
\multirow{3}{*}{\textbf{Ex 2} }&$100$ & 18 & 5.3e-3 & 1.1e-6 & 29 & 3.8e-3 & 7.6e-6  \\

\multirow{3}{*}{}  &$1000$ & 40 & 6.0e-1 & 6.3e-6 & 51 & 4.1e-1 & 8.5e-6   \\

\multirow{3}{*}{}  &$5000$ & 64 & 7.3e+0 & 3.4e-7 & 71 & 4.8e+0 & 5.6e-6 \\
\bottomrule
\end{tabular*}
\end{table*}
The numerical results are reported in Table \ref{Tab01}, where the two methods produce similar results, and our proposed algorithm has relatively less computational time for dimensions of 1000 and 5000.

\subsection{Variational Inequality and Complementarity problems}
We test the following examples.

\textbf{Ex 3} \cite{Kebaili2018A} $x_0=(1,1,\cdots,1)^{\top}$, $F(x)=Mx + q ,$
where
$$
M=\begin{bmatrix}
1      &  2     &  2      & \cdots  & 2 \\
0      &  1     &  2      & \cdots  & 2 \\
0      &  0     &  1      & \cdots  & 2 \\
\vdots & \vdots &  \vdots & \ddots  & \vdots \\
0      & 0      &  0      & \cdots  & 1
\end{bmatrix}  \in \mathbb{R}^{n\times n}, \  q=\begin{bmatrix}
-1      \\
-1      \\
\vdots  \\
-1
\end{bmatrix} \in \mathbb{R}^n .
$$
It has a degenerate solution $x^*=(0,0,\dots,0,1)^{\top}$.

\textbf{Ex 4} \cite{HanBBVI} Initial point $x_0=(1,1,\dots,1)^{\top}$, $F(x) = \rho D(x) + Mx+q$, where $D(x)$ and $Mx+q$ are the nonlinear and linear part of $F(x)$, respectively. $M=A^TA+B$ with a random antisymmetric matrix $A\in \mathbb{R}^{n\times n}$ generated uniformly on $(-5,5)$, and a random matrix $B$ generated similarly. Vector $q$ is uniformly generated on $(-500,500)$. The components of $D(x)$ are $D_i(x)=a_i \arctan{(x_i)}$, where $a_i$ is a random number uniformly generated on $(0,1)$.

\textbf{Ex 5} \cite{Kebaili2018A} Constraint set $C= [0,1]^n$, initial point $x_0 = (-1,-1,\cdots,-1)^{\top}$. $F(x)=Mx + q$, where $M = tridiag(-1, 4, -1) \in \mathbb{R}^{n\times n}$, $q=(-1,-1,\cdots ,-1)^{\top} \in \mathbb{R}^{n}.$

\textbf{Ex 6} \cite{Harker1990A} Feasible set $C:=\{x\in R^m | \  Qx \le b\}$, with initial point $(1,1,\cdots,1)^{\top}$. $F(x)=Mx$, where $Q \in \mathbb{R}^{l\times m}$ is a random matrix and $b\in \mathbb{R}^l$ is a non-negative random vector. $M=ZZ^{\top}+S+D$, where $Z,S,D\in \mathbb{R}^{m \times m}$ are randomly generated matrices, $S$ is skew-symmetric, and $D$ is a positive definite diagonal matrix, such that the variational inequality problem has a unique solution.

\begin{table*}[htbp]
\scriptsize
\centering
\caption{\text{Results on variational inequalities and complementarity problems}}\label{Tab02}
\begin{tabular}{cccccccc}
\toprule
\multirow{2}{*}{} &\ & \multicolumn{3}{c}{INM} & \multicolumn{3}{c}{IRQN} \\
\cmidrule(r){3-5} \cmidrule(r){6-8}
&{n} &  iter     &  times   &   res

&  iter      &  times   &   res \\
\midrule
\multirow{3}{*}{\textbf{Ex 3} } &
$100$ & 2 & 4.7e-3 & 1.8e-15 & 2 & 1.5e-3 & 0   \\

\multirow{3}{*}{ } &$1000$ & 2 & 1.7e-2 & 0 & 2 & 2.3e-2 & 0    \\

\multirow{3}{*}{ } &$5000$ & 2 & 5.6e-1 & 3.3e-13 & 2 & 6.7e-1 & 0      \\
\midrule
\multirow{2}{*}{\textbf{Ex 4} } &
$100$ & 2001 & 5.6e-1 & 1.2e+0 & 208 & 6.3e-2 & 3.8e-6  \\

\multirow{2}{*}{ } &$1000$ & 2001 & 2.8e+1 & 1.3e+0 & 1771 & 4.5e+1 & 8.7e-6  \\

\midrule
\multirow{3}{*}{\textbf{Ex 5} } &
$100$ & 2 & 4.8e-4 & 4.4e-17 & 4 & 1.2e-3 & 0   \\

\multirow{3}{*}{ } &$1000$ & 2 & 2.5e-2 & 0 & 4 & 1.0e-1 & 0    \\

\multirow{3}{*}{ } &$5000$ & 2 & 7.3e-1 & 1.7e-15 & 4 & 3.1e+0 & 0      \\
\midrule
\multirow{2}{*}{\textbf{Ex 6} } &
$5$ & 53 & 1.0e-1 & 6.6e-6 & 38 & 9.3e-2 & 3.0e-6    \\

\multirow{2}{*}{ } &$10$ & 33 & 1.6e-1 & 9.7e-6 & 41 & 4.6e-2 & 6.1e-6   \\
\bottomrule
\end{tabular}
\end{table*}
The results for \textbf{EX 3} to \textbf{EX 6} are reported in Table \ref{Tab02}. From Table \ref{Tab02}, it shows that our proposed algorithm has advantages in terms of computational time and number of iterations.  It can be noticed that IRQN takes less number of iterations than INM, verifying the efficiency of merit function. Particular, for \textbf{Ex 4}, IRQN only takes about 200 iterations, which is much more smaller than that of INM.

\subsection{Impact of Initial Points}
We test the role of initial points on the following examples.

\textbf{Ex 7} \cite{Kebaili2018A} (Kojima-Shindo problem) The initial points are as follows  $(5,-1,1,1)^{\top}$, $(-1,-5,0,-3)^{\top}$, $(0.6,4,0,8)^{\top}$, $(1,-2,0.7,1)^{\top}$, $(1,-6,5,3)^{\top}$, and $(-1,-1,-1,-1)^{\top}$. The feasible set is $C=[-0.5, 0.5]^{4}$. $F$ is defined as:
\begin{align*}
F(x_1,x_2,x_3,x_4)=\begin{pmatrix}
3x_1^2 + 2x_1x_2 + 2x_2^2 + x_3 +3x_4 - 6 \\
2x_1^2 + x_1 + x_2^2 + 10x_3 + 2x_4 - 2 \\
3x_1^2+x_1x_2+2x_2^2+2x_3+9x_4-9 \\
x_1^2 + 3x_2^2 + 2x_3 +3x_4 - 3
\end{pmatrix}.
\end{align*}

\textbf{Ex 8} \cite{Kebaili2018A} Constraint set $C= [0,5]^4$, the initial points $(1,1,1,1)^{\top}$, $(-1,-1,-1,-1)^{\top}$, $(-6,-6,-10,-1)^{\top}$. $F$ is defined as: $$
F(x)=\begin{pmatrix}
x_1^3 - 8 \\
x_2 - x_3 + x_2^3 + 3 \\
x_2 - x_3 + 2x_3^3 - 3 \\
x_4 - 2x_4^3
\end{pmatrix}.
$$ This problem has degenerate solutions $x^{*}=(2,0,1,0)^{\top}$ and $x^{*}=(2,0,1,5)^{\top}$.

\textbf{Ex 9} \cite{Kebaili2018A} Constraint set $C= [-10,10]^4$. Initial point: $(3,3,3,3)^{\top}$, $(0,0,0,0)^{\top}$, $(1,1,1,1)^{\top}$. $F$ is defined as: $$
F(x)=\begin{pmatrix}
400x_1^3 + 2x_1 - 400x_1x_2 - 2 \\
-200x_1^2 + 200.2x_2 + 19.8x_4 - 40 \\
360x_1^3 + 2x_2 - 360x_3x_4 - 2 \\
19.8x_2 - 180x_3^2 + 220.2x_4^2 - 40
\end{pmatrix}.
$$

\textbf{Ex 10} \cite{majig2008restricted}  Constraint set is given by $$ C=
\left \{
x\in \mathbb{R}^n \  | \  \sum_{i=1}^n x_i \le n,\  \sum_{i=1}^n ix_i \ge n+1, x_i \ge 0, i=1,2,\cdots , n-1
\right \}.$$
The initial points are $(10,0,0,0,0)^{\top}$, $(10,0,10,0,10)^{\top}$, $(25,0,0,0,0)^{\top}$. $F(x) = Mx+q$, where
$$
M=\begin{bmatrix}
1 & 0 & 0 & 0 & 1\\
0 & 1 & 0 & 0 & 1\\
0 & 0 & 1 & 0 & 0\\
0 & 0 & 1 & 1 & 0
\end{bmatrix} , \  q=\begin{bmatrix}
-1\\
-1\\
-0.5\\
-0.5\\
-1
\end{bmatrix} .
$$

\textbf{Ex 11} \cite{TajiA1993} Constraint set is given by
$$C=\begin{Bmatrix}
x\in R^5 \mid 10 \le \sum_{k=1}^5 x_i \le 50,x_i\ge 0,i=1,2,3,4,5
\end{Bmatrix}.$$

The initial points are  $(25,0,0,0,0)^{\top}$, $(10,0,10,0,10)^{\top}$, $(0,2.5,2.5,2.5,2.5)^{\top}$, $(10,0,0,0,0)^{\top}$. $F(x)=Mx+\rho *C(x)+q ,$ where $C(x) = arctan (x-2)$,
\begin{align*}
M&=
\begin{bmatrix}
 0.726& -0.949& 0.266& -1.193& -0.504\\
 1.645& 0.678& 0.333& -0.217& -1.443\\
 -1.016& -0.225& 0.769& 0.934& 1.007\\
 1.063& 0.567& -1.144& 0.550& -0.548\\
 -0.259& 1.453& -1.073& 0.509& 1.026
\end{bmatrix},
\ q=
\begin{bmatrix}
5.308\\
0.008\\
-0.938\\
1.024\\
-1.312
\end{bmatrix}.
\end{align*}
This problem has a unique solution $x^{*}=(2,2,2,2,2)^{\top}$.

\textbf{Ex 12} \cite{TajiA1993} Constraint set $$C=\{x\in \mathbb{R}^5 \mid Ax\le b,x_i\ge 0,i=1,2,3,4,5\},$$ where
\begin{align*}
A=\begin{bmatrix}
&0   &0  &-0.5 &0    &-2\\
&-2  &-2 &0    &-0.5 &-2\\
&2   &2  &-4   &2    &-3\\
&-5  &3  &-2   &0    &2
\end{bmatrix}, \
b=\begin{bmatrix}
-10\\ -10\\ 13\\ 18
\end{bmatrix}.
\end{align*}
Initial points are $(0,0,100,0,0)^{\top}$, $(10,0,10,0,10)^{\top}$, $(0,2.5,2.5,2.5,2.5)^{\top}$. $F(x)=Mx + D(x) + q $, where
\begin{align*}
M=\begin{bmatrix}
    &3   &-4  &-16  &-15  &-4 \\
    &4   &1   &-5   &-10  &-11 \\
    &16  &5   &2    &-11  &-7 \\
    &15  &10  &11   &3    &-10 \\
    &4   &11  &7    &10   &1
    \end{bmatrix},\
D(x)=\begin{bmatrix}
    0.004x(1)^4 \\
    0.007x(2)^4 \\
    0.005x(3)^4 \\
    0.009x(4)^4 \\
    0.008x(5)^4 \\
    \end{bmatrix}, \
q=\begin{bmatrix}
-15\\ 10\\ -50\\ -30\\ -25
\end{bmatrix}.
\end{align*}
This problem has a unique solution $x^{*}=(9.08,4.84,0.00,0.00,5.00)^{\top}$.

\begin{table*}[htbp]
\scriptsize
\centering
\caption{Variational inequalities and complementarity problems}\label{Tab03}
\begin{tabular*}{\textwidth}{@{\extracolsep\fill}cccccccc}
\toprule
\multirow{2}{*}{} &\ & \multicolumn{3}{c}{INM} & \multicolumn{3}{c}{IRQN} \\
\cmidrule(r){3-5} \cmidrule(r){6-8}
&{$x_0$} &  iter     &  time   &   res

&  iter      &  time   &   res \\
\midrule
\multirow{6}{*}{\textbf{Ex 7} } &
$(5,-1,1,1)$ & 4 & 2.0e-4 & 4.6e-6 & 11 & 6.2e-5 & 3.0e-7   \\

\multirow{6}{*}{} &$(-1,-5,0,-3)$ & 20 & 2.8e-3 & 4.8e-7 & 6 & 1.7e-3 & 5.6e-10    \\

\multirow{6}{*}{} &$(0.6,4,0,8)$ & 101 & 1.2e+0 & 3.4e-3 & 11 & 1.7e-3 & 1.4e-7      \\

\multirow{6}{*}{} &$(1,-2,0.7,1)$ & 4 & 9.6e-3 & 9.1e-7 & 5 & 4.8e-4 & 9.1e-7      \\

\multirow{6}{*}{} &$(1,-6,5,3)$ & 4 & 3.2e-4 & 9.1e-7 & 7 & 4.0e-4 & 1.0e-6      \\

\multirow{6}{*}{} &$(-1,-1,-1,-1)$ & 17 & 1.0e-3 & 4.7e-7 & 12 & 9.8e-4 & 2.0e-10\\
\midrule
\multirow{3}{*}{\textbf{Ex 8} } &
$(-1,-1,-1,-1)$& 101 & 1.1e-2 & 4.3e-2 & 12 & 9.8e-4 & 3.8e-6   \\

\multirow{3}{*}{} &$(1,1,1,1)$ & 101 & 9.3e-3 & 4.6e-2 & 12 & 9.3e-4 & 7.1e-6    \\

\multirow{3}{*}{} &$(-6,-6,-10,-1)$ & 101 & 1.1e-2 & 4.3e-2 & 18 & 1.3e-3 & 1.3e-7  \\
\midrule
\multirow{3}{*}{\textbf{Ex 9} } &
$(0,0,0,0)$ & 101 & 7.3e-1 & 1.9e-1 & 43 & 4.2e-2 & 9.3e-6   \\

\multirow{3}{*}{} &$(1,1,1,1)$ & 101 & 1.3e+0 & 2.3e-1 & 101 & 4.0e-1 & 1.6e-1 \\

\multirow{3}{*}{} &$(3,3,3,3)$ & 101 & 9.4e-1 & 2.3e-1 & 101 & 4.0e-1 & 1.6e-1 \\
\midrule
\multirow{3}{*}{\textbf{Ex 10} } &
$(10,0,0,0,0)$ & 18 & 9.9e-3 & 6.6e-6 & 21 & 6.0e-3 & 1.1e-6   \\

\multirow{3}{*}{} &$(10,0,10,0,10)$ & 30 & 1.2e-2 & 8.5e-6 & 39 & 9.2e-3 & 1.0e-6 \\

\multirow{3}{*}{} &$(25,0,0,0,0)$ & 36 & 1.0e-2 & 3.4e-6 & 36 & 8.6e-3 & 5.6e-7 \\
\midrule
\multirow{4}{*}{\textbf{Ex 11} } &
$(25,0,0,0,0)$ & 101 & 3.5e-2 & 4.8e-1 & 17 & 2.2e-2 & 3.3e-6   \\

\multirow{4}{*}{} &$(10,0,10,0,10)$ & 101 & 3.3e-2 & 4.8e-1 & 22 & 1.7e-2 & 7.6e-6 \\

\multirow{4}{*}{} &$(0,2.5,2.5,2.5,2.5)$ & 101 & 3.4e-2 & 4.8e-1 & 15 & 1.5e-2 & 4.2e-6 \\

\multirow{4}{*}{} &$(10,0,0,0,0)$ & 101 & 3.5e-2 & 4.8e-1 & 20 & 1.5e-2 & 1.9e-6 \\
\midrule
\multirow{3}{*}{\textbf{Ex 12} } &
$(0,0,100,0,0)$ & 101 & 6.0e-2 & 1.1e-1 & 24 & 1.8e-2 & 1.1e-6   \\

\multirow{3}{*}{} &$(10,0,10,0,10)$ & 101 & 3.5e-2 & 1.1e-1 & 24 & 2.1e-2 & 1.4e-7 \\

\multirow{3}{*}{} &$(0,2.5,2.5,2.5,2.5)$ & 101 & 4.7e-2 & 7.4e-2 & 56 & 1.6e-2 & 2.1e-6 \\
\bottomrule
\end{tabular*}
\end{table*}

We test the impact of different initial points under the same dimension. Table \ref{Tab03} shows that for \textbf{Ex 8}, \textbf{Ex 11} and \textbf{Ex 12}, INM did not achieve the required solution accuracy whereas IRQN returns solutions within reasonable tolerance. Moreover, IRQN has relatively less computation time.

\section{Conclusion}\label{sec13}
In this paper, we proposed a globally convergent inexact regularized quasi-Newton method (IRQN) for monotone (non-strongly monotone) variational inequality problems. The main difference between IRQN and Solodov's method (INM) \cite{solodov2000truly} lies in the use of a quasi-Newton method, as well as the implementation of a merit function. By utilizing the error bounds of the merit function, we prove that the iterative sequence of the algorithm can eventually converge to the solution of the problem and the numerical results reveal the efficiency of the proposed method.


\bibliography{sn-bibliography}

\end{document}